\newcommand{\ran}{\operatorname{ran}}
\newcommand{\spncl}{\overline{\operatorname{span}}}
\newcommand{\supp}{\operatorname{supp}}
\newcommand{\A}{\mathcal A}
\newcommand{\C}{\mathbb{C}}
\newcommand{\Z}{\mathbb{Z}}
\newcommand{\R}{\mathbb{R}}
\newcommand{\N}{\mathbb{N}}
\newcommand{\D}{{\mathbb D}}
\newcommand{\T}{\mathbb{T}}
\newcommand{\B}{\mathscr{B}}
\renewcommand{\S}{\mathscr S}
\newcommand{\Sc}{\mathcal S}
\newcommand{\M}{\mathcal M}
\renewcommand{\H}{\mathcal{H}}
\newcommand{\K}{\mathcal{K}}
\newcommand{\G}{\mathcal G}
\newcommand{\F}{\mathcal F}
\newcommand{\la}{ \langle }
\newcommand{\ra}{\rangle}
\newcommand{\overbar}[1]{\mkern 1.5mu\overline{\mkern-1.5mu#1\mkern-1.5mu}\mkern 1.5mu}
\newcommand{\ob}{\overbar}
\newcommand{\wt}{\widetilde}
\newtheorem{thm}{Theorem}[section]
\newtheorem{corollary}[thm]{Corollary}
\newtheorem{lemma}[thm]{Lemma}
\newtheorem{definition}[thm]{Definition}
\newtheorem{remark}[thm]{Remark}
\newtheorem{example}[thm]{Example}
\numberwithin{equation}{section}
\def\textmatrix#1&#2\\#3&#4\\{\bigl({#1 \atop #3}\ {#2 \atop #4}\bigr)}
\def\dispmatrix#1&#2\\#3&#4\\{\left({#1 \atop #3}\ {#2 \atop #4}\right)}
\numberwithin{equation}{section}
\def\textmatrix#1&#2\\#3&#4\\{\bigl({#1 \atop #3}\ {#2 \atop #4}\bigr)}
\def\dispmatrix#1&#2\\#3&#4\\{\left({#1 \atop #3}\ {#2 \atop #4}\right)}
\begin{document}
	
	\title[]{Doubly commuting and dual doubly commuting semigroups of isometries}

	\author{Tirthankar Bhattacharyya, Shubham Rastogi and Vijaya Kumar U.}
\address{Department of Mathematics\\
	Indian Institute of Science\\
	Bangalore 560012, India}
\email{tirtha@iisc.ac.in; shubhamr@iisc.ac.in; vijayak@iisc.ac.in}
	
	\maketitle
	
	\renewcommand{\thefootnote}{\fnsymbol{footnote}}
	
	\footnotetext{MSC: Primary: 47D03, 47A65. Secondary: 47B91.\\
		Keywords: The shift semigroup, Isometric semigroups, Doubly commuting, Dual doubly commuting.}
	
	\begin{abstract}
		Structures of commuting semigroups of isometries under certain additional assumptions like double commutativity or dual double commutativity are found.
	\end{abstract}
\section{Introduction}
     One of the building blocks of  $C_0$-semigroups of isometries is the {\em right-shift-semigroup} $\S^\F=(\S^\F_t)_{t\ge 0}$ on $L^2(\R^+,\F)$ for any Hilbert space  $\F.$ It is defined as
     \[(\S_t^\F f)x=\begin{cases}
     	f(x-t) &\text{if } x\ge t,\\
     	0 & \text{otherwise,}
     \end{cases}\]
     for $f\in L^2(\R_+,\F)$ and it has been shown by Cooper in \cite{Cooper} to be one of the direct summands in the structure theorem for any $C_0$-semigroup of isometries. We shall denote $\S_t^\C$ by just $\S_t.$ Sometimes we shall use the identification of  $L^2(\R_+,\F)$ with $L^2(\R_+)\otimes \F$ which transforms $\S_t^\F$ into $\S_t\otimes I_\F.$

     All semigroups in this note are $C_0$-semigroups (i.e., strongly continuous one parameter semigroups). Two such semigroups $V_1=(V_{1,t})_{t\ge 0}$ and $V_2=(V_{2,s})_{s\ge 0}$   on a Hilbert space $\H$  are said to be

     \begin{enumerate}
     	\item {\em commuting} if
     	\begin{equation*}
     		V_{1,t}V_{2,s}=V_{2,s}V_{1,t} \text{ for all } t,s\ge 0.
     	\end{equation*}
     \item {\em doubly commuting} if they are commuting and
     \begin{equation*}
     	V_{1,t}V_{2,s}^*=V_{2,s}^*V_{1,t} \text{ for all } t,s\ge 0.
     \end{equation*}
   \end{enumerate}
We completely characterize pairs of doubly commuting semigroups of isometries in terms of concrete models in \cref{thm:DCcnumodel} and \cref{thm:cnuUmodel}. See \cite{Binzar-Lazureanu 2017} and \cite{Binzar-Lazureanu 2018} for a preliminary result of Cooper type decomposition obtained by Bînzar and  Lăzureanu.

This can be done in two ways. To begin with, we use the identification of the right-shift-semigroup with a semigroup of certain  multiplication operators on a vector valued Hardy space given in \cite[Theorem 3.4]{BCL}. This gives rise to certain partial isometries. We find commutants of these partial isometries. This gives a proof of \cref{thm:DCcnumodel}. On the other hand, the result \cite[Lemma 12.1]{BCL} of Berger-Coburn-Lebow is also useful for proving  \cref{thm:DCcnumodel}.

The minimal unitary extension of a pair of commuting semigroups of isometries is known due to \cite{Slocinski-dilation,Ptak}. Therefore, the dual can be defined for a pair of commuting semigroups of isometries. We obtain a Cooper type decomposition for a pair of dual doubly commuting semigroups of isometries in \cref{dualdoublecommuting} and a model in \cref{dual-model}.

We state below Cooper's theorem. The motivation comes from the celebrated Wold decomposition in \cite{Wold}. A semigroup $V=(V_t)_{t\ge 0}$ of isometries on a Hilbert space $\H$ is said to be {\em completely nonunitary} (c.n.u) if there is no reducing subspace $\H_0$ of $\H$ such that $V|_{\H_0}=(V_t|_{\H_0})_{t\ge 0}$ is a unitary semigroup.

 \begin{thm}[Cooper \cite{Cooper}]\label{Cooper}
 	Let $V=(V_t)_{t\ge 0}$ be a strongly continuous semigroup of isometries on a Hilbert space $\H.$ Then  $\H$ uniquely decomposes as the direct sum
 	\begin{equation*}\label{CooperDecomp}
 	\H=\H_p\oplus \H_u
 	\end{equation*}
 	of reducing subspaces $\H_p$ and $\H_u$ of $V,$ so that
 	\begin{enumerate}
 		\item $V|_{\H_p}$ is a c.n.u semigroup of isometries, and
 		\item $V|_{\H_u}$ is a  unitary semigroup.
 	\end{enumerate}
 Moreover, any c.n.u semigroup of isometries is unitarily equivalent to the right-shift-semigroup $\S^\F=(\S_t^\F)_{t\ge 0}$ on $L^2(\R_+,\F)$ for some Hilbert space $\F.$
 	\end{thm}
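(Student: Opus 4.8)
The plan is to prove the two assertions in turn: first the Wold-type splitting $\H = \H_p \oplus \H_u$, and then the concrete model for the completely nonunitary summand.

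For the decomposition I would take $\H_u = \bigcap_{t\ge 0}\ran V_t$. Each $V_t$ is an isometry, so its range is closed and $\H_u$ is a closed subspace. The semigroup law $V_{s+t}=V_sV_t$ together with injectivity of the isometries shows that if $x=V_t y\in\H_u$ then the unique preimage $y$ already lies in $\ran V_s$ for every $s$, hence $y\in\H_u$; thus each $V_t$ maps $\H_u$ onto $\H_u$ isometrically, so $V|_{\H_u}$ is a unitary semigroup, and $V_t^*|_{\H_u}=(V_t|_{\H_u})^{-1}$ shows $\H_u$ reduces $V$. Setting $\H_p=\H_u^\perp$, any reducing subspace $\K\subseteq\H_p$ on which the restriction is unitary must satisfy $\K\subseteq\ran V_t$ for all $t$, hence $\K\subseteq\H_u\cap\H_p=\{0\}$; this makes $V|_{\H_p}$ completely nonunitary. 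For uniqueness I would note that $\H_u=\bigcap_t\ran V_t$ is determined intrinsically by $V$: for any admissible splitting $\H=\K_p\oplus\K_u$ one computes $\bigcap_t\ran V_t=\big(\bigcap_t\ran(V_t|_{\K_p})\big)\oplus\K_u=\{0\}\oplus\K_u$, the first summand vanishing because $V|_{\K_p}$ is completely nonunitary, whence $\K_u=\H_u$ and $\K_p=\H_p$.

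For the model, let $V$ be a completely nonunitary semigroup of isometries on a Hilbert space $\H$ and pass to its cogenerator. Writing $V_t=e^{tA}$ for the generator $A$, which is skew-symmetric since $\operatorname{Re}\langle Ax,x\rangle=\tfrac12\left.\frac{d}{dt}\|V_tx\|^2\right|_{t=0}=0$ on the domain, I set $T=(A+I)(A-I)^{-1}$. Invoking the Sz.-Nagy--Foias dictionary between a $C_0$-semigroup of contractions and its cogenerator, $T$ is an isometry because the $V_t$ are isometries, and $T$ is completely nonunitary, equivalently a pure isometry, because the semigroup is. The classical Wold decomposition for the single isometry $T$ then identifies it with the unilateral shift of multiplicity $\dim\F$, where $\F=\ker T^*=\H\ominus\ran T$.

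It remains to identify the semigroup whose cogenerator is this unilateral shift, and this is the step I expect to be the main obstacle. I would compute the cogenerator of the right-shift-semigroup $\S^\F$ on $L^2(\R_+,\F)$ directly: its generator is $-\frac{d}{dx}$ on the domain of absolutely continuous functions with $L^2$ derivative satisfying $f(0)=0$, and using the Laguerre-function expansion of $e^{tA}$ in powers of $T$, equivalently the unitary identification of $L^2(\R_+)$ with the Hardy space of the disc via the Fourier--Cayley transform, one checks that this cogenerator is exactly the unilateral shift of multiplicity $\dim\F$. Since the inverse Cayley transform $A=(T+I)(T-I)^{-1}$, and hence $V_t=e^{tA}$, are uniquely recovered from $T$, matching multiplicities produces the desired unitary equivalence of $V$ with $\S^\F$. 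The delicate points are the domain and boundary-condition bookkeeping for the unbounded generator and the explicit verification of the continuous-to-discrete dictionary supplied by the Laguerre functions.
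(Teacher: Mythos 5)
The paper does not prove this statement at all: it is quoted as a classical result with a citation to Cooper, so there is no in-paper argument to measure you against, and I assess your proposal on its own. Your first half is correct and complete: taking $\H_u=\bigcap_{t\ge 0}\ran V_t$, showing each $V_t$ maps $\H_u$ onto $\H_u$ (the ``onto'' direction using $V_{s+t}=V_tV_s$ and injectivity is exactly right), deducing that $\H_u$ reduces $V$, and getting uniqueness from the intrinsic description of $\H_u$ is the standard Wold-type argument. Your second half is also a correct route, the one taken in the Sz.-Nagy--Foias book \cite{Nagy-Foias}: pass to the cogenerator $T=(A+I)(A-I)^{-1}$ (which exists since $1$ is in the resolvent set of the generator of a contraction semigroup), use the dictionary that $V$ is isometric, unitary, or c.n.u.\ exactly when $T$ is, apply the Wold decomposition to the single isometry $T$, and identify the cogenerator of $\S^\F$ as the unilateral shift of multiplicity $\dim\F$ via the Laguerre basis; injectivity of the semigroup-to-cogenerator map then transfers the unitary equivalence back to the semigroups. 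The steps you flag as delicate (the domain bookkeeping and the Laguerre computation) are genuinely where the work is, but they are standard and recorded in \cite{Nagy-Foias}. For contrast, Cooper's original 1947 proof takes a different path through the generator itself: $iA$ is a maximal symmetric operator, and von Neumann's deficiency-index theory identifies the non-self-adjoint maximal symmetric operators with direct sums of copies of $-i\,d/dx$ on $L^2(\R_+)$, which yields the shift model directly; your cogenerator route trades that unbounded-operator analysis for the discrete Wold decomposition, at the cost of the continuous-to-discrete dictionary.
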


 An astute reader would notice that this paper is motivated by structure theorems for commuting isometries obtained over decades in \cite{BKPS}, \cite{GG}, \cite{Popovici II} and \cite{Popovici} to quote a few.

\section{On the right-shift-semigroup}

For any Hilbert space $\F,$ let   $H^2(\D,\F)$ denote the  Hardy space of $\F$-valued analytic functions over the unit disc $\D.$
We just write $H^2(\D)$ instead of $H^2(\D,\C).$ The space $H^2(\D,\F)$ is naturally identified with  $H^2(\D)\otimes\F.$

It is well known that (see \cite{Nagy-Foias}) the right-shift-semigroup $\S^\F$ on $L^2(\R_+,\F)$ is unitarily equivalent to the semigroup $(M_{\psi_t})_{t\ge 0}$ of multiplication operators on the vector valued Hardy space  $H^2(\D,\F),$ where $\psi_t:\D\to \B(\F)$ are the operator valued bounded analytic functions defined by $$\psi_t(z)=e^{t\frac{z+1}{z-1}} I_\F$$ for $z\in \D$ and $t\ge 0.$

On the other hand, in \cite[Theorem 3.4]{BCL}, Berger, Coburn and Lebow obtained another identification of the right-shift-semigroup $\S^\F$ with a semigroup of multiplication operators on the vector valued Hardy space $H^2(\D)\otimes L^2(\T, \F)$ where $\T$ is the unit circle and $\F$ is a separable Hilbert space. 
In this section, we recall their identification of $\S^\F$   rephrased on the vector valued Hardy space $H^2(\D,L^2([0,1], \F)).$ The subsequent lemma is of independent interest and is also useful to give different proofs of \cref{thm:DCcnumodel} and  \cref{normal-cont-comm}  for the case of separable Hilbert spaces.
\begin{thm}\label{thm:shiftsemigp}
	The right-shift-semigroup $\S^\F=(\S_t^\F)_{t\ge 0}$ on $L^2(\R_+,\F)$ is unitarily equivalent to the semigroup $(M_{\varphi_t^\F})_{t\geq0}$ on $H^2(\D, L^2([0,1],\F)),$ where for $n\in \Z_+$ and $n\leq t< n+1,$ the multiplier $\varphi^\F_t:\D\to \B(L^2([0,1],\F))$ is given by
	\begin{equation}\label{phi_t}
		\varphi^\F_t(z)=E^\F_{0,t-n}z^n+E^\F_{1,t-n}z^{n+1}
	\end{equation}
	and for $0\leq s<1,$ $E^\F_{0,s},E^\F_{1,s}$ are the partial isometries in $\B(L^2([0,1],\F))$ given by
	\begin{equation}
		(E^\F_{0,s}f)(x)=\begin{cases}
			0 & \text{if } x<s,\\
			f(x-s) &\text{if }s\le x\le 1,
		\end{cases}
	\end{equation} and
	\begin{equation}
		(E^\F_{1,s}f)(x)=\begin{cases}
			f(1-s+x) & \text{if } x\le s,\\
			0 &\text{if }s< x\le 1.
		\end{cases}
	\end{equation}
\end{thm}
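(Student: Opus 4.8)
The plan is to exhibit an explicit unitary $W\colon L^2(\R_+,\F)\to H^2(\D,\K)$, where $\K=L^2([0,1],\F)$, and to verify the intertwining relation $W\S^\F_t=M_{\varphi^\F_t}W$ for every $t\ge 0$; unitary equivalence of the semigroups follows at once. The unitary arises from slicing $\R_+$ into unit intervals: given $f\in L^2(\R_+,\F)$, put $f_n(x)=f(n+x)$ for $x\in[0,1]$ and $n\in\Z_+$. Since $\sum_n\|f_n\|_\K^2=\int_0^\infty\|f(y)\|_\F^2\,dy=\|f\|^2$, the map $f\mapsto(f_n)_{n\ge 0}$ is a unitary from $L^2(\R_+,\F)$ onto $\ell^2(\Z_+,\K)$. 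Composing with the standard identification of $\ell^2(\Z_+,\K)$ with $H^2(\D,\K)$ that sends $(f_n)_n$ to the power series $\sum_n f_n z^n$ defines $W$. Note that no separability assumption on $\F$ is needed here.

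Before the main computation I would record that $E^\F_{0,s}$ and $E^\F_{1,s}$ are partial isometries, a routine change-of-variable check: $E^\F_{0,s}$ carries functions supported in $[0,1-s]$ isometrically onto functions supported in $[s,1]$, while $E^\F_{1,s}$ carries functions supported in $[1-s,1]$ isometrically onto functions supported in $[0,s]$.

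The heart of the proof is the block decomposition of $g:=\S^\F_t f$. Fix $t$ with $n\le t<n+1$ and set $s=t-n\in[0,1)$. For $x\in[0,1]$ one has $g_m(x)=f\bigl((m-n)+(x-s)\bigr)$, with the convention $f(y)=0$ for $y<0$. Splitting according to whether $x\ge s$ or $x<s$ rewrites the argument as lying in block $m-n$ at position $x-s$ (when $x\ge s$) or in block $m-n-1$ at position $1-s+x$ (when $x<s$). Matching the two cases against the definitions of the partial isometries yields
\[
g_m=E^\F_{0,s}f_{m-n}+E^\F_{1,s}f_{m-n-1},
\]
again with $f_k=0$ for $k<0$. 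Passing to power series, the first term contributes $z^n E^\F_{0,s}$ applied to $Wf$ and the second contributes $z^{n+1}E^\F_{1,s}$, so that $Wg$ equals multiplication of $Wf$ by $\varphi^\F_t(z)=E^\F_{0,s}z^n+E^\F_{1,s}z^{n+1}$. This is precisely $W\S^\F_t=M_{\varphi^\F_t}W$; strong continuity and the semigroup law for $(M_{\varphi^\F_t})_t$ are then inherited from $\S^\F$ through the fixed unitary $W$.

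I expect the only real obstacle to be the bookkeeping in the block computation—correctly tracking how the fractional shift $s$ redistributes mass between block $m-n$ and block $m-n-1$, and checking that the vanishing of low-index blocks and the behaviour at the boundary $x=s$ match exactly the supports built into $E^\F_{0,s}$ and $E^\F_{1,s}$. Once the case split is organized carefully, the passage to power series and the identification of $\varphi^\F_t$ are mechanical.
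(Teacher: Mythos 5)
Your proposal is correct and follows essentially the same route as the paper: the identical slicing unitary $W f=\sum_n f_n z^n$ with $f_n(x)=f(n+x)$, followed by verification of the intertwining relation. The only cosmetic difference is that the paper first checks the case $0\le t\le 1$ on monomials $gz^n$ and then extends via $\S^\F_t=(\S^\F_1)^n\S^\F_{t-n}$, whereas you carry out the block computation $g_m=E^\F_{0,s}f_{m-n}+E^\F_{1,s}f_{m-n-1}$ directly for all $t$; both are sound.
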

\begin{proof}
	Consider the unitary $W:L^2(\R_+,\F)\to H^2(\D,L^2([0,1],\F))$ defined by
	\begin{equation}\label{eq:unitaryW}
		Wf=\sum_{n=0}^{\infty}f_nz^n
	\end{equation}
	where $f_n(\alpha):=f(n+\alpha)$ for $\alpha\in [0,1]$ and $n\in \Z_+.$ Let $0\leq t<1$ and $g\in L^2([0,1],\F).$ $$W\S_t^\F W^*(gz^n)=W\S_t^\F(h)= (E^\F_{0,t}g)z^n+(E^\F_{1,t}g)z^{n+1}=M_{\varphi^\F_t}(gz^n),$$ where $h\in L^2(\R_+,\F)$ is given by
	\begin{equation*}
		h(\alpha)=\begin {cases}
		g(\alpha-n) & \text{if } n\leq \alpha\leq n+1,\\
		0 & \text{else.}
	\end{cases}
\end{equation*}
It is evident that $W\S_1^\F W^*=M_{\varphi_1^\F}=M_z^{L^2([0,1],\F)},$ the multiplication operator by $z$ on $H^2(\D,L^2([0,1],\F)).$
Hence $W\S_t^\F W^*=M_{\varphi^\F_t}$ for $0\leq t\leq1.$ For $n\in \N$ and $n\leq t<n+1,$ since $\S_t^\F=(\S_1^\F)^n\S_{t-n}^\F,$ we get
\begin{align*}
	W\S_t^\F W^*&=W(\S_1^\F)^nW^*W\S_{t-n}^\F W^*\\
	&=(W\S_1^\F W^*)^nW\S_{t-n}^\F W^*\\
	&=(M_{z}^{L^2([0,1],\F)})^nM_{\varphi^\F_{t-n}}\\
	&=M_{z^n\varphi^\F_{t-n}}\\
	&=M_{\varphi^\F_t}.
\end{align*}This completes the proof.
\end{proof}
\begin{lemma}\label{CommutingWithE0sandE1s}
	Let $\F$ be a separable Hilbert space and  $\Lambda: L^2[0,1]\otimes\F \to L^{2}([0,1],\F)$  be the natural unitary isomorphism. 	Suppose $B\in \B(L^{2}([0,1],\F))$ commutes with $E^\F_{0,s}$ and $E^\F_{1,s}$ for all $0\leq s<1.$ Then, $B=\Lambda(I_{L^{2}[0,1]}\otimes C)\Lambda^*$ for some $C\in \B(\F).$
\end{lemma}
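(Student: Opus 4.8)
The plan is to reduce the claim to a commutant computation on the scalar space $L^{2}[0,1]$ and then transport the conclusion through $\Lambda$. The point is that two useful families of operators are already contained in the (non-self-adjoint) algebra generated by the $E^\F_{0,s}$ and $E^\F_{1,s}$: the scalar multiplication operators and the rotations of the circle $[0,1)\cong\T$. Once $B$ is forced to commute with both families, it can only act as a fixed operator on the $\F$-factor.

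First I would set $R^\F_s:=E^\F_{0,s}+E^\F_{1,s}$ and check directly from the definitions that $R^\F_s$ is the rotation unitary $(R^\F_sf)(x)=f\big((x-s)\bmod 1\big)$, so that $R^\F_sR^\F_{1-s}=I$, rotation by $1$ being the identity. Comparing the definitions again gives the factorisation $E^\F_{0,s}=M_{\chi_{[s,1]}}R^\F_s$, where $M_\phi$ denotes multiplication by the scalar function $\phi$ on $L^2([0,1],\F)$ and $\chi_{[s,1]}$ is the indicator of $[s,1]$. Multiplying on the right by $R^\F_{1-s}=E^\F_{0,1-s}+E^\F_{1,1-s}$ then yields
\[
E^\F_{0,s}\,R^\F_{1-s}=M_{\chi_{[s,1]}},
\]
an element of the algebra generated by the $E^\F$'s, so $B$ commutes with $M_{\chi_{[s,1]}}$ for every $0\le s<1$. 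Taking differences of these indicators and passing to strong limits, $B$ commutes with $M_\phi$ for all $\phi\in L^\infty[0,1]$; and of course $B$ commutes with every rotation $R^\F_s$.

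Next I would invoke the identification $L^2([0,1],\F)\cong L^2[0,1]\otimes\F$, under which $\{M_\phi:\phi\in L^\infty[0,1]\}$ is the diagonalisable (scalar) algebra whose commutant is precisely the algebra of decomposable operators: any $B$ commuting with it has the form $(Bf)(x)=B(x)f(x)$ for an essentially bounded weakly measurable field $x\mapsto B(x)\in\B(\F)$. (Separability of $\F$ is used here.) Inserting $R^\F_sB=BR^\F_s$ into this representation forces $B(x-s)=B(x)$ for a.e.\ $x$, for every $s$. Applying a Fourier-coefficient argument to the scalar functions $x\mapsto\la B(x)u,v\ra$, with $u,v$ ranging over a countable dense subset of $\F$ (again using separability), shows each such function is a.e.\ constant; hence $B(x)=C$ a.e.\ for a single $C\in\B(\F)$. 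Unwinding $\Lambda$ this says exactly $B=\Lambda(I_{L^2[0,1]}\otimes C)\Lambda^*$.

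The step I expect to be the real obstacle is the identification of the commutant of the scalar multiplication algebra with the decomposable operators, together with the passage from the translation invariance $B(x-s)=B(x)$ to constancy of the field; both genuinely rely on $\F$ being separable, which is exactly the hypothesis. A cleaner but more machinery-heavy alternative avoids the field picture: the two relations above show that the von Neumann algebra generated by $\{E^\F_{0,s},E^\F_{1,s}\}$ on the scalar level contains both the maximal abelian algebra $L^\infty[0,1]$ and the translation unitaries, hence equals the crossed product $L^\infty(\T)\rtimes\T\cong\B(L^2[0,1])$, which has trivial commutant; von Neumann's commutation theorem for tensor products then identifies the commutant of $\{E^\F_{0,s},E^\F_{1,s}\}$ with $\{I_{L^2[0,1]}\otimes C:C\in\B(\F)\}$. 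I would keep the explicit computation as the main argument and relegate this to a remark.
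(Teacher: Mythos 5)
Your argument is correct, and it reaches the conclusion by a genuinely different route from the paper's. Both proofs rest on the same two pillars: (a) $B$ commutes with multiplication by scalar indicator functions, and (b) invariance under all rotations of the circle $[0,1)$ kills the dependence on the variable. You establish (a) by the algebraic factorisation $E^\F_{0,s}=M_{\chi_{[s,1]}}R^\F_s$ with $R^\F_s=E^\F_{0,s}+E^\F_{1,s}$ the rotation unitary, so that $M_{\chi_{[s,1]}}=E^\F_{0,s}\bigl(E^\F_{0,1-s}+E^\F_{1,1-s}\bigr)$ lies in the algebra generated by the $E$'s (a clean identity worth recording); the paper instead observes that $\ran E^\F_{0,s}$ and $\ran E^\F_{1,s}$ are complementary subspaces reduced by $B$, and deduces directly that $B(\mathbf 1_{(c,d)}\cdot x)=\mathbf 1_{(c,d)}f_x$ where $f_x:=B(\mathbf 1_{[0,1]}\cdot x)$. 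For (b), you first upgrade $B$ to a decomposable operator $x\mapsto B(x)$ via the commutant theorem for the multiplication masa --- this is where the heavy machinery and the separability of $\F$ enter --- and then show the field is a.e.\ constant by a Fourier argument; the paper avoids direct-integral theory entirely by applying $E^\F_{0,s}(f_x)+E^\F_{1,s}(f_x)=f_x$ to the single vector $f_x$, concluding that its periodic extension is translation invariant, hence $f_x=\mathbf 1_{[0,1]}\cdot y$, and then reading off $C=\Theta^*B\Theta$ on the total set $\{\mathbf 1_{(c,d)}\cdot x\}$. Your route buys conceptual transparency (and your crossed-product remark identifies the commutant in one stroke); the paper's route buys elementarity, needing only the density of step functions and the rigidity of translation-invariant $L^2$ functions. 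The one external theorem you rely on (commutant of the diagonal algebra equals the decomposable operators) is standard for separable $\F$ and is correctly flagged, so there is no gap.
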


\begin{proof}
For any $0\le s< 1,$ note that $$\ran E^\F_{0,s}=\{f: \supp (f)\subseteq[s,1]\} \ and \  \ran E^\F_{1,s}=\{f: \supp (f)\subseteq[0,s]\}.$$ Hence $L^2([0,1],\F)=\ran E^\F_{0,s}\oplus \ran E^\F_{1,s}$ for any $0\le s<1.$ Since $B$ commutes with $E^\F_{0,s}$ and $E^\F_{1,s}$ for all $0\leq s<1,$  the ranges of $E^\F_{0,s}$ and the ranges of $E^\F_{1,s}$ are reduced by $B.$ For any $x\in \F$ and an interval $I\subset [0,1],$ let $\mathbf{1}_I\cdot x:[0,1]\to \F$ denotes the function  $$(\mathbf{1}_I\cdot x)(\alpha)=\begin{cases}
	x & \text{if } \alpha \in I,\\
	0 & \text{else.}
\end{cases}$$  Let  $f_x:=B(\mathbf{1}_{[0,1]}\cdot x).$ Note that
$$B(\mathbf{1}_{[0,1]}\cdot x)=B(\mathbf{1}_{[0,c]}\cdot x)+B(\mathbf{1}_{(c,d)}\cdot x)+B(\mathbf{1}_{[d,1]}\cdot x)=f_x$$
As $B$ reduces the ranges of $E^\F_{1,c}$ and $E^\F_{0,d},$ we have
\[B(\mathbf{1}_{(c,d)}\cdot x)(\alpha)=\begin{cases}
	f_x(\alpha) & \text{if } \alpha \in (c,d),\\
	0 & \text{else.}
\end{cases}\]
Hence $B(\mathbf{1}_{(c,d)}\cdot x)=\mathbf{1}_{(c,d)}f_x$ for all $0\le c<d\le 1.$
With a little more work, $f_x$ can be shown to be a bounded measurable function (and not just in $L^2([0,1],\F)$), but we do not need this fact.

Since $B$ commutes with $E^\F_{0,s}$ and $E^\F_{1,s}$ for $0\leq s<1,$ we have
\[B( \mathbf{1}_{[s,1]}\cdot x)=E^\F_{0,s}(f_x)\quad \text{and}\quad B(\mathbf{1}_{[0,s]}\cdot x)=E^\F_{1,s}(f_x).\]
Therefore,
\begin{equation}\label{translation}
	B(\mathbf{1}_{[0,1]}\cdot x)=E^\F_{0,s}(f_x)+E^\F_{1,s}(f_x)=f_x\text{ for } 0\le s<1.
\end{equation}
Extend $f_x$ on $\R$ periodically with period $1,$ say $\tilde{f_x}.$ Note that the translation  of $\tilde{f_x}$ by any $0\leq s<1$ equals $\tilde{f_x}$ almost everywhere by \cref{translation}. This implies that $f_x=\mathbf{1}_{[0,1]}\cdot y$ for some $y\in \F.$ See also \cite[Problem 4, Chap. 7]{Rudin}.

Let $\Theta:\F\to L^2([0,1],\F)$ be the isometric embedding $\Theta x=\mathbf 1_{[0,1]}\cdot x.$ Use $\Theta $ to define $C$ by $C=\Theta^* B\Theta.$
Let $\Lambda: L^2[0,1]\otimes\F \to L^{2}([0,1],\F)$  be the natural unitary. Then
\begin{align*}
	B(\mathbf 1_{(c,d)}\cdot x)&=\mathbf 1_{(c,d)}B(\Theta(x))\\&=\mathbf 1_{(c,d)}\cdot \Theta^*B\Theta(x)\\
	&=\mathbf 1_{(c,d)}\cdot Cx\\
	&=\Lambda(I_{L^2[0,1]}\otimes C)\Lambda^*(\mathbf 1_{(c,d)}\cdot x).
\end{align*}
Since the characteristic functions $\mathbf{1}_{(c,d)}$ for  $0\le c<d\le 1$ are total in $L^2[0,1],$ we get $B=\Lambda(I_{L^{2}[0,1]}\otimes C)\Lambda^*.$  This completes the proof.
\end{proof}

\section{Doubly commuting  semigroups of isometries}\label{Doubly}

\begin{example}\label{DCexample}
	For $j=1,2$ define $\Sc_{j,t}:L^2(\R_+^2)\to L^2(\R_+^2)$ by
	\begin{align*}
		(\Sc_{1,t}f)(x_1,x_2)=\begin{cases}
			f(x_1-t,x_2) & \text{if }x_1\ge t\\
			0 & \text{else}
		\end{cases}, (\Sc_{2,t}f)(x_1,x_2)=\begin{cases}
			f(x_1,x_2-t) & \text{if }x_2\ge t\\
			0 & \text{else.}
		\end{cases}
	\end{align*}
Then  $\Sc_1:=(\Sc_{1,t})_{t\ge 0}$ and $\Sc_2:=(\Sc_{2,t})_{t\ge 0}$ are doubly commuting semigroups of isometries on $L^2(\R_+^2).$
\end{example}
 A pair of semigroups $(V_1,V_2)$ on $\H$ is said to be {\em jointly unitarily equivalent} to $(W_1,W_2)$ on $\K$ if there is a unitary $U:\H\to \K$ such that
 \begin{equation*}
 	W_{j,t}=UV_{j,t}U^*\text{ for all }t\ge 0, j=1,2.
 \end{equation*}
In the natural isomorphism of  $ L^2(\R_+^2)$ and $L^2(\R_+)\otimes L^2(\R_+)$, we can see that the pair of semigroups of isometries $(\Sc_1,\Sc_2)$  is jointly unitarily equivalent to the pair of semigroups of isometries $(\S\otimes I_{L^2(\R_+)},I_{L^2(\R_+)}\otimes \S),$ where $\S$ is the right-shift-semigroup on $L^2(\R_+).$

\begin{definition}
	A pair $(V_1,V_2)$ of semigroups is called a {\em bishift-semigroup} if $(V_1,V_2)$ is jointly unitarily equivalent to $(\Sc_1\otimes I_\F,\Sc_2\otimes I_\F)$ on $L^2(\R_+^2)\otimes \F$ for some Hilbert space $\F.$
\end{definition}

The following theorem is known due to Bînzar and Lăzureanu \cite{Binzar-Lazureanu 2017,Binzar-Lazureanu 2018}. 
\begin{thm}[\cite{Binzar-Lazureanu 2018}]\label{doublecommuting}
	Let $V_1$ and $V_2$ be doubly commuting semigroups of isometries on $\H,$ then there exists a decomposition
	$$\H=\H_{p,p}\oplus\H_{p,u}\oplus\H_{u,p}\oplus\H_{u,u}$$
	where $\H_{i,j}$ reduces both $V_1$ and $V_2$ for all $i,j\in\{p,u\}$ such that \\
	(1) both $V_1$ and $V_2$ are c.n.u semigroups  on $\H_{p,p},$\\
	(2) $V_1$ is a c.n.u semigroup, $V_2$ is a  unitary semigroup on $\H_{p,u},$\\
	(3) $V_1$ is a  unitary semigroup, $V_2$ is a c.n.u semigroup on $\H_{u,p}$  and\\
	(4) both $V_1$ and $V_2$ are  unitary semigroups on  $\H_{u,u}.$
\end{thm}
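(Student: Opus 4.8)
The plan is to build the four-fold decomposition by intersecting the two Cooper decompositions coming from $V_1$ and $V_2$ separately, and to use double commutativity to guarantee that these two decompositions are compatible. First I would apply \cref{Cooper} to each semigroup individually to obtain orthogonal decompositions $\H=\H^{(j)}_p\oplus\H^{(j)}_u$ reducing $V_j$, with $V_j|_{\H^{(j)}_p}$ c.n.u and $V_j|_{\H^{(j)}_u}$ unitary, for $j=1,2$. The crucial auxiliary fact is an explicit description of the unitary part: for any isometric semigroup $V=(V_t)_{t\ge0}$ one has $\H_u=\bigcap_{t\ge0}\ran V_t$, and since $\ran V_t$ decreases in $t$, the orthogonal projection onto $\H_u$ is the strong limit $P_u=\slim_{t\to\infty}V_tV_t^*$ of the range projections $V_tV_t^*$. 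Indeed, any reducing subspace on which $V$ acts unitarily lies in every $\ran V_t$, while conversely $V$ is surjective, hence unitary, on $\bigcap_t\ran V_t$, which is also readily checked to reduce $V$; this yields both the characterization and the formula for $P_u$.

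Next I would exploit double commutativity at the level of these range projections. For fixed $t,s$, commutativity gives $V_{1,t}V_{2,s}=V_{2,s}V_{1,t}$, while taking adjoints in the doubly commuting relation $V_{2,s}^*V_{1,t}=V_{1,t}V_{2,s}^*$ yields $V_{1,t}^*V_{2,s}=V_{2,s}V_{1,t}^*$. Combining these gives
\[
(V_{1,t}V_{1,t}^*)V_{2,s}=V_{1,t}V_{2,s}V_{1,t}^*=V_{2,s}(V_{1,t}V_{1,t}^*),
\]
so each range projection $V_{1,t}V_{1,t}^*$ commutes with $V_{2,s}$, and an identical computation shows it commutes with $V_{2,s}^*$. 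Passing to the strong limit in $t$, the projection $P^{(1)}_u$ onto $\H^{(1)}_u$ commutes with every $V_{2,s}$ and $V_{2,s}^*$; hence $\H^{(1)}_u$, and therefore its orthogonal complement $\H^{(1)}_p$, reduces $V_2$. By the symmetric argument $\H^{(2)}_u$ and $\H^{(2)}_p$ reduce $V_1$. Moreover, since $P^{(1)}_u$ commutes with each $V_{2,s}V_{2,s}^*$, it commutes with their strong limit $P^{(2)}_u$, so the two Cooper projections commute.

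With the commuting projections $P^{(1)}_u,P^{(2)}_u$ in hand, I would set $\H_{p,p}=\H^{(1)}_p\cap\H^{(2)}_p$, $\H_{p,u}=\H^{(1)}_p\cap\H^{(2)}_u$, $\H_{u,p}=\H^{(1)}_u\cap\H^{(2)}_p$ and $\H_{u,u}=\H^{(1)}_u\cap\H^{(2)}_u$. Because the underlying projections commute, these four subspaces are mutually orthogonal, sum to $\H$, and each reduces both $V_1$ and $V_2$. Finally I would read off the asserted behaviour by restriction: if $V_j|_{\H^{(j)}_p}$ is c.n.u and $\K\subseteq\H^{(j)}_p$ is a reducing subspace of $V_j$, then any reducing subspace of $V_j|_\K$ on which $V_j$ acts unitarily is already such a subspace inside $\H^{(j)}_p$, forcing $V_j|_\K$ to be c.n.u; and the restriction of a unitary semigroup to a reducing subspace is again unitary. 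Applying this to the four intersections gives claims (1)--(4).

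The main obstacle is exactly the compatibility step: without the doubly commuting hypothesis there is no reason for the Cooper decomposition of $V_1$ to reduce $V_2$. The computation above isolates what is needed, namely that it is precisely the relation $V_{1,t}^*V_{2,s}=V_{2,s}V_{1,t}^*$ (equivalently $V_{1,t}V_{2,s}^*=V_{2,s}^*V_{1,t}$) that lets $V_{2,s}$ and $V_{2,s}^*$ pass through the range projection $V_{1,t}V_{1,t}^*$. Mere commutativity would only handle $V_{2,s}$ and not its adjoint, and would fail both to make $\H^{(1)}_u$ reduce $V_2$ and to make the two Cooper projections commute. Everything else reduces to routine bookkeeping with commuting projections and restrictions of \cref{Cooper}.
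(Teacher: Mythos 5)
Your proposal is correct, and in fact the paper offers no proof of this statement at all --- it is quoted as a known result of B\^{\i}nzar and L\u{a}zureanu with a citation, so there is nothing internal to compare against. Your argument is the standard one and all the steps check out: the identification $\H_u=\bigcap_{t\ge 0}\ran V_t$ with $P_u=\slim_{t\to\infty}V_tV_t^*$ is valid (the range projections decrease, and the intersection is readily seen to be reducing with $V$ surjective on it), the computation $(V_{1,t}V_{1,t}^*)V_{2,s}=V_{2,s}(V_{1,t}V_{1,t}^*)$ uses exactly the adjoint of the doubly commuting relation as you say, strong limits preserve commutation with a fixed bounded operator, and the restriction argument for inheriting the c.n.u property from $\H^{(1)}_p$ to a reducing subspace of it is sound. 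You have also correctly isolated where double commutativity is indispensable: mere commutativity would let $V_{2,s}$ pass through $V_{1,t}V_{1,t}^*$ only on one side, and the two Cooper projections need not commute.
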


In the rest of this section we shall give models for $\H_{p,p},\H_{p,u}$ and $\H_{u,p}$ parts in the theorem above. The following lemma describes the operators doubly commuting with the shift. Let $M_z$ denote the multiplication by $z$ on the Hardy space of the unit disc $H^2(\D).$ 
\begin{lemma}\label{Double-Comm-Mz}
	If $N$ is a bounded operator on $H^2(\D)\otimes \F$ doubly commuting with $M_z\otimes I_\F,$ then $N=I_{H^2(\D)}\otimes \omega$ for some bounded  operator $\omega$ on $\F.$ In particular, any normal operator $N$ commuting with $M_z\otimes I_\F,$ is of the form $N=I_{H^2(\D)}\otimes \omega$ for some normal operator $\omega$ on $\F.$
\end{lemma}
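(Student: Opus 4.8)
The plan is to pass to the coordinate picture $H^2(\D)\otimes\F\cong\ell^2(\Z_+,\F)$ coming from the orthonormal basis $\{z^k\}_{k\ge0}$ of $H^2(\D)$, under which $A:=M_z\otimes I_\F$ becomes the forward shift $S$ sending $(x_0,x_1,\dots)\mapsto(0,x_0,x_1,\dots)$ and its adjoint $A^*$ becomes the backward shift. Writing $N$ as an infinite block matrix $(N_{ij})_{i,j\ge0}$ with entries $N_{ij}\in\B(\F)$, the idea is to read off exactly what each of the two commutation relations imposes on the blocks.

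First I would use $NS=SN$. Comparing the two sides entrywise yields $N_{i-1,j}=N_{i,j+1}$ for all $i\ge1,\,j\ge0$, together with $N_{0,j}=0$ for $j\ge1$. The first relation says that $N_{ij}$ is constant along diagonals (it depends only on $i-j$), and the second forces the strict upper triangle to vanish; thus $N$ is a lower-triangular block-Toeplitz operator, equivalently $N=M_\Phi$ for an analytic multiplier $\Phi(z)=\sum_{k\ge0}A_kz^k$ with $A_k:=N_{k,0}\in\B(\F)$. Next I would bring in $NS^*=S^*N$. The analogous entrywise comparison gives $N_{i,0}=0$ for all $i\ge1$, i.e.\ $A_k=0$ for every $k\ge1$. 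Setting $\omega:=A_0=N_{0,0}$, the block matrix of $N$ is diagonal with $\omega$ in each diagonal slot, which is precisely $N=I_{H^2(\D)}\otimes\omega$, as claimed.

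For the ``in particular'' clause, the key observation is that normality upgrades ordinary commutativity to double commutativity through Fuglede's theorem. Indeed, if $N$ is normal and $NA=AN$, then Fuglede's theorem, applied with $N$ as the normal operator and $A$ as the commuting operator, gives $AN^*=N^*A$; taking adjoints of both sides yields $NA^*=A^*N$. Hence $N$ commutes with $A^*$ as well, so $N$ doubly commutes with $A$. The first part then furnishes $\omega\in\B(\F)$ with $N=I_{H^2(\D)}\otimes\omega$, and since $N^*N=I_{H^2(\D)}\otimes\omega^*\omega$ and $NN^*=I_{H^2(\D)}\otimes\omega\omega^*$, normality of $N$ is equivalent to normality of $\omega$.

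I expect the only genuine bookkeeping to be the entrywise translation of the two commutation relations in the second paragraph; everything afterwards is formal. The one subtlety worth flagging is the direction in which Fuglede's theorem is invoked: one must take the \emph{normal} operator to be $N$ (not the non-normal isometry $A$) and then pass to adjoints, so that the conclusion lands on commutation with $A^*$ rather than merely recovering commutation with $A$.
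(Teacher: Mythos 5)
Your proof is correct. For the ``in particular'' clause you argue exactly as the paper does: Fuglede's theorem applied to the normal operator $N$ gives $AN^*=N^*A$, hence $NA^*=A^*N$ after taking adjoints, and normality of $I\otimes\omega$ visibly reduces to normality of $\omega$. For the first assertion the paper gives no argument at all --- it simply cites Fillmore's \emph{Notes on operator theory} --- whereas you supply a complete, self-contained block-matrix computation: the relation $NS=SN$ forces $N$ to be lower-triangular block-Toeplitz (an analytic Toeplitz operator $M_\Phi$), and $NS^*=S^*N$ then kills every Fourier coefficient of $\Phi$ except the constant term, leaving $N=I_{H^2(\D)}\otimes\omega$ with $\omega=N_{0,0}$. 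Your entrywise identities are the right ones ($N_{i,j+1}=N_{i-1,j}$ for $i\ge1$ with $N_{0,j}=0$ for $j\ge1$ from the first relation, and $N_{i,0}=0$ for $i\ge1$ from the second), and in fact the two sets of relations already yield the constant block-diagonal form without needing the multiplier interpretation at all. What your route buys is a proof readable without chasing the reference; what the paper's route buys is brevity, since the description of the commutant of the vector-valued shift is classical. Your closing remark about the direction of Fuglede's theorem is a sensible caution, though the standard Fuglede--Putnam symmetry makes either phrasing workable.
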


\begin{proof}
	A proof of the first part of the lemma can be found in \cite[page 38]{Fil}. The part involving normality is then straightforward from Fuglede's theorem (\cite{Fuglede}).
\end{proof}
\begin{thm}\label{thm:DCcnumodel}
	Let $V_1$ and $V_2$ be two semigroups on $\H$. Then $V_1$ and $V_2$ are doubly commuting c.n.u semigroups of isometries if and only if $(V_1,V_2)$ is a bishift-semigroup. 
\end{thm}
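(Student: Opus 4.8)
The plan is to prove both implications, with the reverse direction (model $\Rightarrow$ doubly commuting c.n.u.) being routine and the forward direction carrying all the content.

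For the \emph{if} part, suppose $(V_1,V_2)$ is a bishift-semigroup, jointly unitarily equivalent to $(\Sc_1\otimes I_\F,\Sc_2\otimes I_\F)$. Using $\Sc_1\cong\S\otimes I_{L^2(\R_+)}$ and $\Sc_2\cong I_{L^2(\R_+)}\otimes\S$, I would observe at once that $\Sc_{1,t}\Sc_{2,s}^*=\S_t\otimes\S_s^*=\Sc_{2,s}^*\Sc_{1,t}$, so the pair is doubly commuting, and tensoring with $I_\F$ preserves both the commutation and double commutation. Each $V_j$ is, after permuting tensor factors, the right-shift-semigroup $\S^{L^2(\R_+)\otimes\F}$, hence a c.n.u.\ semigroup of isometries by \cref{Cooper}. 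This settles the easy direction.

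For the forward direction I would proceed by successive identifications. Since $V_1$ is a c.n.u.\ semigroup of isometries, \cref{Cooper} gives $V_1\cong\S^{\F_1}$ on $L^2(\R_+,\F_1)$ for some $\F_1$, so assume $V_1=\S^{\F_1}$. Via \cref{thm:shiftsemigp} pass to $H^2(\D,L^2([0,1],\F_1))$, where $V_{1,t}=M_{\varphi_t^{\F_1}}$; unitary equivalence keeps $(V_1,V_2)$ doubly commuting. Because $M_{\varphi_1^{\F_1}}=M_z\otimes I_{L^2([0,1],\F_1)}$ and $V_{2,s}$ doubly commutes with it, \cref{Double-Comm-Mz} (applied with $L^2([0,1],\F_1)$ in the role of the coefficient space) gives $V_{2,s}=I_{H^2(\D)}\otimes B_s$ for some $B_s\in\B(L^2([0,1],\F_1))$. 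Writing $M_{\varphi_t^{\F_1}}=I_{H^2(\D)}\otimes E^{\F_1}_{0,t}+M_z\otimes E^{\F_1}_{1,t}$ for $0\le t<1$ and comparing the $z^0$- and $z^1$-coefficients in $V_{2,s}M_{\varphi_t^{\F_1}}=M_{\varphi_t^{\F_1}}V_{2,s}$ forces $B_s$ to commute with $E^{\F_1}_{0,t}$ and $E^{\F_1}_{1,t}$ for all $0\le t<1$. Then \cref{CommutingWithE0sandE1s} yields $B_s=\Lambda(I_{L^2[0,1]}\otimes C_s)\Lambda^*$ for some $C_s\in\B(\F_1)$, and conjugating back through $W$ gives the clean formula $(V_{2,s}g)(x)=C_s\!\left(g(x)\right)$, i.e.\ $V_{2,s}=I_{L^2(\R_+)}\otimes C_s$ on $L^2(\R_+,\F_1)$.

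To finish, I would note that since $V_{2,s}=I_{L^2(\R_+)}\otimes C_s$ is a c.n.u.\ semigroup of isometries, $(C_s)_{s\ge0}$ is itself a c.n.u.\ $C_0$-semigroup of isometries on $\F_1$ (a reducing subspace of $\F_1$ on which $C$ is unitary would tensor up to one for $V_2$). By \cref{Cooper}, $C\cong\S^{\F_2}$, so $\F_1\cong L^2(\R_+)\otimes\F_2$ and $C_s\cong\S_s\otimes I_{\F_2}$. Substituting gives $V_1\cong\S\otimes I_{L^2(\R_+)}\otimes I_{\F_2}$ and $V_2\cong I_{L^2(\R_+)}\otimes\S\otimes I_{\F_2}$ jointly, that is $(V_1,V_2)\cong(\Sc_1\otimes I_{\F_2},\Sc_2\otimes I_{\F_2})$, a bishift-semigroup. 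The main obstacle is the bookkeeping in the middle: tracking $V_2$ faithfully through the three successive unitary identifications, justifying the coefficient comparison that produces commutation with all the $E^{\F_1}_{0,t},E^{\F_1}_{1,t}$, and the separability hypothesis built into \cref{CommutingWithE0sandE1s}. It is precisely this last point that forces a second route through \cite[Lemma 12.1]{BCL} in order to cover non-separable $\F_1$.
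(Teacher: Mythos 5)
Your argument is correct and coincides essentially verbatim with the paper's second proof of this theorem (the route via \cref{Cooper}, \cref{thm:shiftsemigp}, \cref{Double-Comm-Mz} and \cref{CommutingWithE0sandE1s}, followed by a final application of \cref{Cooper} to the semigroup $(C_s)$). Your closing remark is also on target: the separability hypothesis in \cref{CommutingWithE0sandE1s} is exactly why the paper's first proof instead goes through \cite[Lemma 12.1]{BCL}, which covers the non-separable case.
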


\begin{proof}
	Let $\A=C^*\{V_{2,s}:s\ge 0\}\subseteq \B(\H)$ be the $C^*$-algebra generated by $\{V_{2,s}:s\ge 0\}.$ Suppose $V_1$ and $V_2$ are doubly commuting,  every element of $\A$ commutes with $V_{1,t}$ for all $t\ge 0.$  Therefore, by \cite[Lemma 12.1]{BCL} there is a Hilbert space $\G$ and a unitary $U:\H\to L^2(\R_+)\otimes \G$ such that
	$$UV_{1,t}U^*=\S_t\otimes I_\G\text{ and }\quad UV_{2,t}U^*=I_{L^2(\R_+)}\otimes B_t$$ for all $t\ge 0,$ where $B_t\in\B(\G).$ As $(V_{2,t})_{t\ge 0}$ is a c.n.u semigroup of isometries, we must have $(B_t)_{t\ge 0}$ to be a c.n.u semigroup of isometries on $\G.$ Therefore, by \cref{Cooper}, there is a Hilbert space $\F$ and a unitary $W:\G\to L^2(\R_+)\otimes \F$ such that
	$$WB_tW^*=\S_t\otimes I_\F\text{ for } t\ge 0.$$
	Let $Z: \H\to L^2(\R_+)\otimes L^2(\R_+)\otimes\F$ be the unitary $Z=(I\otimes W)U.$ Then
	\begin{align*}
		ZV_{1,t}Z^*&=(I_{L^2(\R_+)}\otimes W)(\S_t\otimes I_\G)(I_{L^2(\R_+)}\otimes W^*)=\S_t\otimes I_{L^2(\R_+)}\otimes I_\F \text{ and }\\
		ZV_{2,t}Z^*&=(I_{L^2(\R_+)}\otimes W)(I_{L^2(\R_+)}\otimes B_t)(I_{L^2(\R_+)}\otimes W^*)=I_{L^2(\R_+)}\otimes\S_t\otimes I_\F
	\end{align*}
	for all $t\ge 0.$ This shows that $(V_1,V_2)$ is a bishift-semigroup.
\end{proof}
We provide another proof of \cref{thm:DCcnumodel} using \cref{CommutingWithE0sandE1s} and \cref{Double-Comm-Mz}  when $\H$ is separable.
\begin{proof}[Another proof of \cref{thm:DCcnumodel}]
	Since $V_1$ is a c.n.u semigroup,  by \cref{Cooper} and \cref{thm:shiftsemigp}, there exists a separable Hilbert space $\F$ and a unitary $U:\H\to H^2(\D,L^2([0,1],\F))$ such that  $UV_{1,t}U^*=M_{\varphi^\F_t}$ where $\varphi^\F_t$ is  as given in \cref{phi_t}. Then $UV_{2,t}U^*$ doubly commutes with $M_{\varphi^\F_1}=M_z^{L^2([0,1],\F)}$ for all $t\ge 0.$ Therefore, by \cref{Double-Comm-Mz}, $UV_{2,t}U^*=M_{\psi_t}$ where  $\psi_t$ is the constant function given by $\psi_t(z)=B_t$ for all $z\in \D.$  This shows that $(V_1,V_2)$ is jointly unitarily equivalent to $((M_{\varphi^\F_t})_t,(M_{\psi_t})_t)$ on $H^2(\D,L^2([0,1],\F)).$
	
	Note that  $B=(B_t)_{t\ge 0}$ is a c.n.u semigroup of isometries on $L^2([0,1],\F).$ Since $\psi_t$ commutes with $\varphi^\F_s,$ $B_t$ commutes with $E^\F_{0,s}$ and $E^\F_{1,s}$ for all $t\ge 0$ and $0\le s<1.$  Therefore by \cref{CommutingWithE0sandE1s}, $\Lambda^* \psi_t(z)\Lambda =I_{L^2([0,1])}\otimes C_t$ for all $z\in \D,$ where $C=(C_t)$ is a c.n.u semigroup of isometries on $\F$ and $\Lambda: L^2[0,1]\otimes\F \to L^{2}([0,1],\F)$  is the natural unitary.   Note also that $\Lambda^*\varphi^\F_t(z)\Lambda=\varphi^\C_t(z)\otimes I_\F$ for all $z\in \D$ and $t\ge 0,$ since $\Lambda^* E^\F_{j,s}\Lambda=E^\C_{j,s}\otimes I_\F$ for $j=0,1$ and for all $0\le s< 1.$
	
	Therefore, $((M_{\varphi^\F_t})_t,(M_{\psi_t})_t)$ is jointly unitarily equivalent to $(\S\otimes I_\F, I_{L^2(\R_+)}\otimes C_t)$ by \cref{thm:shiftsemigp}, and this is jointly unitarily equivalent to $(\S\otimes I_{L^2(\R_+)}\otimes I_\K,I_{L^2(\R_+)}\otimes \S\otimes I_\K)$ for some Hilbert space $\K,$ by \cref{Cooper}, as $(C_t)$ is a c.n.u semigroup of isometries. This completes the proof.	
\end{proof}
We observe the following result which is analogous to \cref{Double-Comm-Mz}.  The proof follows from \cite[Lemma 12.1]{BCL} and Fuglede's theorem \cite{Fuglede}. 
We remark that an elementary proof can be given using the notion of cogenerators and \cref{Double-Comm-Mz} instead of invoking \cite[Lemma 12.1]{BCL}. We leave that to the reader.
\begin{thm}\label{normal-cont-comm}
	Let $A=(A_t)_{t\ge 0}$ be a  semigroup of bounded operators on $L^2(\R_+)\otimes \F.$  Then, $A$ doubly commutes with $\S\otimes I_\F$ if and only if there exists a semigroup $(B_t)_{t\ge 0}$ on $\F$ such that $A_t=I\otimes B_t$ for all $ t\ge 0.$
	In particular, a normal semigroup $A$ commutes with $\S\otimes I_\F$ if and only if there exists a normal semigroup $(B_t)_{t\ge 0}$ on $\F$ such that $A_t=I\otimes B_t$ for all $ t\ge 0.$
\end{thm}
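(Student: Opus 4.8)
The plan is to reduce the continuous, semigroup statement to the single-operator \cref{Double-Comm-Mz} by passing from the shift semigroup to its cogenerator, and to deduce the normal case from the doubly commuting case via Fuglede's theorem. The ``if'' direction is routine: if $A_t=I\otimes B_t$ for a $C_0$-semigroup $(B_t)_{t\ge0}$ on $\F$, then $I\otimes B_t$ commutes with $\S_s\otimes I_\F$ and with $(\S_s\otimes I_\F)^*=\S_s^*\otimes I_\F$ for every $s$, so $A$ doubly commutes with $\S\otimes I_\F$; moreover the semigroup law and strong continuity of $(A_t)$ are clearly equivalent to those of $(B_t)$.

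For the ``only if'' direction I would work in the Hardy-space picture recalled in Section~2, identifying $\S\otimes I_\F$ on $L^2(\R_+)\otimes\F$ with the multiplication semigroup $(M_{\psi_t}\otimes I_\F)_{t\ge0}$ on $H^2(\D)\otimes\F$, where $\psi_t(z)=e^{t(z+1)/(z-1)}$. The cogenerator of this semigroup is $M_z\otimes I_\F$, and a direct Laplace-transform computation exhibits it as an explicit bounded function of the semigroup,
\begin{equation*}
	M_z\otimes I_\F=I-2\int_0^\infty e^{-s}\,(M_{\psi_s}\otimes I_\F)\,ds .
\end{equation*}
Assume now that $A$ doubly commutes with $\S\otimes I_\F$ and view $A_t$ in this picture. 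Since $A_t$ commutes with every $M_{\psi_s}\otimes I_\F$, it commutes with the strong integral above, hence with $M_z\otimes I_\F$. Likewise $A_t$ commutes with each $(M_{\psi_s}\otimes I_\F)^*$, so $A_t^*$ commutes with each $M_{\psi_s}\otimes I_\F$ and therefore with $M_z\otimes I_\F$; taking adjoints, $A_t$ commutes with $(M_z\otimes I_\F)^*$. Thus $A_t$ doubly commutes with $M_z\otimes I_\F$, and \cref{Double-Comm-Mz} gives $A_t=I_{H^2(\D)}\otimes B_t$ for some $B_t\in\B(\F)$, that is $A_t=I\otimes B_t$ on $L^2(\R_+)\otimes\F$ after transporting back. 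The relations $A_{t+t'}=A_tA_{t'}$, $A_0=I$ and the strong continuity of $A$ then pass to $(B_t)$, so $(B_t)$ is a $C_0$-semigroup. I expect the only genuine point to be this cogenerator reduction—that commuting with the entire shift semigroup is encoded in commuting with the single bounded operator $M_z\otimes I_\F$—after which the conclusion is immediate from \cref{Double-Comm-Mz}.

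For the ``in particular'' statement, assume each $A_t$ is normal and $A$ merely commutes with $\S\otimes I_\F$. Applying Fuglede's theorem \cite{Fuglede} to the normal operator $A_t$ and the commuting operator $\S_s\otimes I_\F$ shows that $\S_s\otimes I_\F$ commutes with $A_t^*$, equivalently $A_t$ commutes with $(\S_s\otimes I_\F)^*$; hence $A$ in fact doubly commutes with $\S\otimes I_\F$, and the first part gives $A_t=I\otimes B_t$. Normality of $I\otimes B_t$ is equivalent to normality of $B_t$, so $(B_t)$ is a normal semigroup, and the converse is immediate. As alternatives to the cogenerator step one may invoke \cite[Lemma~12.1]{BCL} directly, or argue inside $H^2(\D,L^2([0,1],\F))$ via \cref{thm:shiftsemigp}: double commutation with $M_{\varphi^\F_1}=M_z$ gives $A_t=I_{H^2(\D)}\otimes D_t$ by \cref{Double-Comm-Mz}, and commutation with $M_{\varphi^\F_s}$ for $0\le s<1$ forces $D_t$ to commute with $E^\F_{0,s}$ and $E^\F_{1,s}$, whence \cref{CommutingWithE0sandE1s} identifies $D_t$ as $I_{L^2[0,1]}\otimes B_t$.
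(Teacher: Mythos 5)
Your proof is correct. Note, though, that the paper's own proof of \cref{normal-cont-comm} is a one-line deduction from \cite[Lemma 12.1]{BCL} together with Fuglede's theorem; your main argument is exactly the ``elementary proof using the notion of cogenerators and \cref{Double-Comm-Mz}'' that the authors mention in the surrounding remark and explicitly leave to the reader. The key point you supply is the resolvent identity
\begin{equation*}
M_z\otimes I_\F \;=\; I-2\int_0^\infty e^{-s}\,(M_{\psi_s}\otimes I_\F)\,ds,
\end{equation*}
which is the cogenerator formula $V=I+2(A-I)^{-1}$ with the resolvent at $1$ written as a Laplace transform (and one checks $\int_0^\infty e^{-s}e^{s(z+1)/(z-1)}\,ds=\tfrac{1-z}{2}$, so the right-hand side is multiplication by $z$). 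This correctly converts double commutation with the whole semigroup into double commutation with the single bounded operator $M_z\otimes I_\F$, after which \cref{Double-Comm-Mz} applies and the semigroup axioms and strong continuity transfer to $(B_t)$ by restricting to slices $f\otimes\F$; the Fuglede step for the ``in particular'' clause is the same in both treatments. What the comparison buys: your cogenerator route is self-contained modulo \cref{Double-Comm-Mz} and places no separability hypothesis on $\F$, whereas the paper's route is shorter but imports the nontrivial external result \cite[Lemma 12.1]{BCL}; the third route you sketch through \cref{thm:shiftsemigp} and \cref{CommutingWithE0sandE1s} is also valid but, as with the second proof of \cref{thm:DCcnumodel}, requires $\F$ to be separable, a caveat you should state when offering it as an alternative.
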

As a corollary to the above theorem we get the model for the $\H_{p,u}$ (equivalently, for the $\H_{u,p}$) part.

\begin{corollary}\label{thm:cnuUmodel}
		Let $V_1$ be a c.n.u semigroup of isometries commuting with a unitary semigroup $V_2$ on $\H.$ Then, there is a Hilbert space $\F$ and a unitary isomorphism between $\H$ and $L^2(\R_+)\otimes \F$ so that under the unitary isomorphism, $(V_1,V_2)$ is jointly equivalent to $(\S\otimes I_\F, I_{L^2(\R_+)}\otimes U)$ for some unitary semigroup $U$ on $\F,$ where $\S$ is the right-shift-semigroup on $L^2(\R_+).$
\end{corollary}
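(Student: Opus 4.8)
The plan is to reduce the statement to the \emph{normal} part of \cref{normal-cont-comm}. First I would exploit that $V_1$ is a c.n.u.\ semigroup of isometries: by \cref{Cooper} there is a Hilbert space $\F$ and a unitary $Z:\H\to L^2(\R_+)\otimes\F$ with $ZV_{1,t}Z^*=\S_t\otimes I_\F$ for all $t\ge 0$. I then transport $V_2$ through $Z$, setting $A_t:=ZV_{2,t}Z^*$. The family $A=(A_t)_{t\ge 0}$ is again a $C_0$-semigroup on $L^2(\R_+)\otimes\F$; it consists of unitaries because each $V_{2,t}$ is unitary; and it commutes with $\S\otimes I_\F$ because $V_2$ commutes with $V_1$ and conjugation by the unitary $Z$ preserves commutation.

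Next I would invoke \cref{normal-cont-comm}. Since each $A_t$ is unitary it is in particular normal, so $A$ is a normal semigroup commuting with $\S\otimes I_\F$; the theorem then yields a normal semigroup $(B_t)_{t\ge 0}$ on $\F$ with $A_t=I_{L^2(\R_+)}\otimes B_t$ for all $t\ge 0$. The only genuine step of my own is to upgrade $(B_t)$ from normal to unitary, and this is elementary: from $I_{L^2(\R_+)}\otimes B_t^*B_t=(I\otimes B_t)^*(I\otimes B_t)=I$ one reads off $B_t^*B_t=I_\F$, and symmetrically $B_tB_t^*=I_\F$, so each $B_t$ is unitary. Hence $U:=(B_t)_{t\ge 0}$ is a unitary semigroup on $\F$.

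Finally I would verify the soft points. That $(B_t)$ is a semigroup and is strongly continuous is inherited from $A$ through the tensor identity $A_t=I\otimes B_t$; strong continuity of $t\mapsto B_tx$ follows by testing against vectors $g\otimes x$ with $g\in L^2(\R_+)$, $g\neq 0$. With $\F$, the unitary $Z$, and the unitary semigroup $U$ produced, the pair $(V_1,V_2)$ is jointly unitarily equivalent to $(\S\otimes I_\F,\,I_{L^2(\R_+)}\otimes U)$, which is precisely the asserted model. I do not expect any serious obstacle here: essentially all of the content is already packaged in \cref{Cooper} and the normal case of \cref{normal-cont-comm}, leaving only the trivial passage from normality to unitarity and the routine transfer of the semigroup and continuity properties across the tensor decomposition.
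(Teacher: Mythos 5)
Your argument is correct and is essentially the route the paper intends: it applies \cref{Cooper} to the c.n.u.\ semigroup $V_1$ to pass to the model $\S\otimes I_\F$, and then invokes the normal case of \cref{normal-cont-comm} (each unitary $V_{2,t}$ being normal) to split off $V_2$ as $I_{L^2(\R_+)}\otimes B_t$, the passage from normal to unitary $B_t$ being immediate from $I\otimes B_t^*B_t=I$. The paper states this as a direct corollary of \cref{normal-cont-comm} without writing out these steps, so your proposal simply makes the same deduction explicit.
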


\section{Dual doubly commuting semigroups of isometries}
\label{DDC}
Let $(V_1,V_2)$ be a pair of commuting semigroups of isometries on $\H,$ where $V_j=(V_{j,t})_{t\ge 0}$ for $j=1,2.$ Let $(\ob{V_1},\ob{V_2})$ on $\ob\H$ be the minimal unitary extension of the pair $(V_1,V_2),$ (this extension exists due to \cite{Slocinski-dilation, Ptak}). So, $\ob{V_1}=(\ob{V_{1,t}})_{t\in \R}$ and  $\ob{V_2}=(\ob{V_{2,s}})_{s\in \R}$ are commuting unitary groups on $\ob{\H}$ such that $V_{1,t}V_{2,s}=P_\H\ob{V_{1,t}}\ob{V_{2,s}}|_\H$ for $t,s\ge 0$ and $$\ob\H=\spncl\{\ob{V_{1,t}}\ob{V_{2,s}}(\H):s,t\in\R\}.$$ 
 It is clear that $\ob{\H}\ominus \H$ is an invariant subspace for $(\ob{V_{i,t}})^*$ for all $t\ge 0,i=1,2.$ Let $\wt{V_{i,t}}:={(\ob{V_{i,t}})}^*|_{\ob\H \ominus \H}$ for $i=1,2,t\ge 0.$ 
Then $(\wt{V_1},\wt{V_2})$ is a pair of commuting
semigroups of isometries on $\wt\H:=\ob\H\ominus\H,$ where $\wt{V_1}  =(\wt{V_{1,t}})_{t\ge 0}$ and $\wt{V_2}=(\wt{V_{2,s}})_{s\ge 0}.$ The pair $(\wt{V_1},\wt{V_2})$ is called the {\em dual} of $(V_1,V_2).$ The pair $(V_1,V_2)$ is said to be {\em dual doubly commuting} if the dual $(\wt{V_1},\wt{V_2})$ is doubly commuting. In the sequel, for a semigroup $W=(W_t)_{t\ge 0}$ the notation $W^*$ denotes the adjoint semigroup $(W_t^*)_{t\ge 0}.$

\begin{example}\label{DDC-ex}
	Let $\H=L^2(\R^2\setminus \R^2_+)$ and for $f\in\H,$ define
	\begin{align*}
		(\M_{1,t}f)(x_1,x_2)&=\begin{cases}
			0 & \text{if } x_2\ge 0 \text{ and }  x_1\ge -t,\\
			f(x_1+t,x_2) &  \text{else,}
		\end{cases}\\ (\M_{2,t}f)(x_1,x_2)&=\begin{cases}
			0 & \text{if }x_1\ge 0\text{ and }  x_2\ge -t,\\
			f(x_1,x_2+t)& \text{else.}
		\end{cases}
	\end{align*}
Then $(\M_1,\M_2)$ is a pair of commuting semigroups of isometries on $\H,$ where $\M_1=(\M_{1,t})_{t\ge 0}$ and $\M_2=(\M_{2,s})_{s\ge 0}.$ Let $\ob{\Sc_{j,t}}:L^2(\R^2)\to L^2(\R^2)$ for $j=1,2,t\in \R $ be defined by
\begin{align*}
	(\ob{\Sc_{1,t}}f)(x_1,x_2)=	f(x_1-t,x_2),\quad
	(\ob{\Sc_{2,t}}f)(x_1,x_2)=f(x_1,x_2-t) .
\end{align*}
for $f\in L^2(\R^2)$ and $(x_1,x_2)\in \R^2.$ Note that
	 $({(\ob{\Sc_1})}^*,{(\ob{\Sc_2})}^*)$ on $L^2(\R^2)$ is the minimal unitary extension of $(\M_1,\M_2).$ Therefore the dual of $(\M_1,\M_2)$ is $(\Sc_1,\Sc_2)$ on $L^2(\R_+^2).$ Hence $(\M_1,\M_2)$ is dual doubly commuting.
\end{example}

A pair $(V_1,V_2)$ of commuting semigroups of isometries on $\H$ is called {\em  completely nonunitary (c.n.u)} if there is no non-zero reducing subspace $\H_0$ of $\H$  for both $V_1$ and $V_2$ so that $V_i|_{\H_0}$ is a unitary semigroup for $i=1,2.$

\begin{remark}\label{rem}\leavevmode
	\begin{enumerate}
		\item \cref{Cooper} for a semigroup  $V=(V_t)_{t\ge 0}$ of isometries, in particular, gives us the Wold decomposition of $V_t$ for each $t> 0.$
		\item Let $(v_1,v_2)$ be a pair of commuting isometries. Let $\H_u$ be the unitary part in the Wold decomposition of  $v=v_1v_2.$  Then $\H_u$ is reducing for both $v_1$ and $v_2.$
		\item Using the above, it is easy to notice that $(V_1,V_2)$ is a c.n.u pair of semigroups of isometries if and only if the product semigroup $V=V_1V_2=(V_{1,t}V_{2,t})_{t\ge 0}$ is c.n.u.
		
	\end{enumerate}
\end{remark}
This section uses the ideas from \cite{Popovici II} extensively.

\begin{lemma}\label{dualc.n.u}
 The dual pair $(\wt{V_1},\wt{V_2})$ is always c.n.u.
\end{lemma}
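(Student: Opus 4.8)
The plan is to argue directly from the definition of a c.n.u.\ pair, pushing everything back to the \emph{minimality} of the unitary extension, i.e.\ to the identity $\ob\H=\spncl\{\ob{V_{1,t}}\ob{V_{2,s}}(\H):s,t\in\R\}$. So I would suppose, toward a contradiction, that $\mathcal K\subseteq\wt\H$ is a nonzero reducing subspace for both $\wt{V_1}$ and $\wt{V_2}$ on which each $\wt{V_i}$ restricts to a \emph{unitary} semigroup, and aim to force $\mathcal K=\{0\}$.

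First I would translate the hypothesis on the $\wt{V_i}$ into a statement about the extension groups $\ob{V_i}$. Since $\wt{V_{i,t}}=(\ob{V_{i,t}})^*|_{\wt\H}$ and $\wt\H$ is invariant for $(\ob{V_{i,t}})^*$, we have $\wt{V_{i,t}}k=(\ob{V_{i,t}})^*k$ for $k\in\mathcal K$. Unitarity of $\wt{V_i}|_{\mathcal K}$ then gives surjectivity, $\wt{V_{i,t}}\mathcal K=\mathcal K$, that is $(\ob{V_{i,t}})^*\mathcal K=\mathcal K$ for all $t\ge0$. Applying the unitary $\ob{V_{i,t}}$ to both sides yields $\ob{V_{i,t}}\mathcal K=\mathcal K$ for $t\ge0$, while the same relation read as $\ob{V_{i,-t}}\mathcal K=\mathcal K$ handles $t\le0$. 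Hence $\mathcal K$ reduces the whole unitary group $\ob{V_i}$, for $i=1,2$.

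The second step combines this with $\mathcal K\subseteq\wt\H=\ob\H\ominus\H$ (so $\mathcal K\perp\H$) and minimality. For any $h\in\H$, $k\in\mathcal K$ and $s,t\in\R$,
\[
\la k,\ \ob{V_{1,t}}\ob{V_{2,s}}h\ra=\la (\ob{V_{2,s}})^*(\ob{V_{1,t}})^*k,\ h\ra ,
\]
and the vector $(\ob{V_{2,s}})^*(\ob{V_{1,t}})^*k$ lies in $\mathcal K$ because $\mathcal K$ reduces both groups; being in $\mathcal K\perp\H$, the right-hand side vanishes. Thus $\mathcal K\perp\ob{V_{1,t}}\ob{V_{2,s}}(\H)$ for all $s,t\in\R$, so $\mathcal K$ is orthogonal to $\spncl\{\ob{V_{1,t}}\ob{V_{2,s}}(\H):s,t\in\R\}=\ob\H$. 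Since $\mathcal K\subseteq\ob\H$, this forces $\mathcal K=\{0\}$, which is exactly the assertion that $(\wt{V_1},\wt{V_2})$ is c.n.u.

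The one delicate point — the hard part — is the first step: one must genuinely use \emph{surjectivity} (unitarity), not merely invariance, of $\wt{V_i}$ on $\mathcal K$ in order to upgrade invariance of $\mathcal K$ under $(\ob{V_{i,t}})^*$ to full reducibility of $\mathcal K$ under the entire group $\ob{V_i}$; the compressed adjoint $\wt{V_{i,t}}^{\,*}=P_{\wt\H}\,\ob{V_{i,t}}|_{\wt\H}$ does not by itself control $\ob{V_{i,t}}\mathcal K$. Once both groups reduce $\mathcal K$, the two-parameter minimality of $\ob\H$ finishes the argument, and notably no appeal to Cooper's decomposition or to the product-semigroup reformulation of \cref{rem} is required.
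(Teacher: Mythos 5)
Your proof is correct and follows essentially the same route as the paper's: the key step in both is that a subspace of $\wt\H$ jointly reducing $\wt{V_1},\wt{V_2}$ to unitary semigroups must in fact reduce the unitary groups $\ob{V_1},\ob{V_2}$, and is therefore orthogonal to $\spncl\{\ob{V_{1,t}}\ob{V_{2,s}}(\H):s,t\in\R\}=\ob\H$, contradicting minimality. The only (harmless) difference is that you argue directly from the definition with an arbitrary such subspace $\mathcal K$, whereas the paper first passes to the product semigroup $\wt{V_1}\wt{V_2}$ and extracts the unitary part of its Cooper decomposition via \cref{rem} before running the same orthogonality argument.
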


\begin{proof}
	We have $\ob{V_{i,t}}=\begin{pmatrix}
		V_{i,t}&\star\\0&(\wt{V_{i,t}})^*
	\end{pmatrix}$ on $\ob\H=\H\oplus\wt\H$ for $t\ge 0, i=1,2.$ By  minimality, $\ob\H=\spncl\{\ob{V_{1,t}}\ob{V_{2,s}}(\H):t,s\in \R\}.$  
 Let $\wt V=\wt {V_1}\wt {V_2},$ that is, $\wt {V_t}=\wt {V_{1,t}}\wt {V_{2,t}}$  for all $t\ge 0.$  Let $\H_0$ be the unitary part in the Cooper's decomposition (\cref{Cooper}) of $\wt{V}.$ Suppose $\wt V$ is not c.n.u. Then $\H_0$ is a non-zero subspace of $\wt\H$ and $\H_0$  is reducing for $\wt{V_{i,t}}$ for all $t\ge 0,i=1,2$ (from \cref{rem}). Since $(\ob{V_{i,t}})^*|_{\H_0}=\wt{V_{i,t}}|_{\H_0},$  note that $\H_0$ is a reducing subspace for $\ob{V_{i,t}}$  for all $t\ge 0$ and $i=1,2.$ Therefore, $$\spncl\{\ob{V_{1,t}}\ob{V_{2,s}}(\H):t,s\in \R\}\perp \H_0.$$ This is a contradiction to the minimality.
\end{proof}

\begin{thm}\label{double-dual}
	Let $(V_1,V_2)$ be a c.n.u pair of commuting semigroups of isometries on $\H.$ If the pair  $(\ob{V_1},\ob{V_2})$ on $\ob{\H}$ is the minimal unitary extension of $(V_1,V_2),$ then the pair $((\ob{V_1})^*,(\ob{V_2})^*)$  is the minimal unitary extension of the dual  $(\wt{V_1},\wt{V_2}).$ In particular, $({\overset{\approx}{V_1}}, {\overset{\approx} {V_2}})=(V_1,V_2),$ where  $({\overset{\approx}{V_1}}, {\overset{\approx}{V_2}})$ is the dual of $(\wt{V_1},\wt{V_2}).$
\end{thm}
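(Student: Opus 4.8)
The plan is to verify directly the three defining properties of a minimal unitary extension for the pair $((\ob{V_1})^*,(\ob{V_2})^*)$ acting on $\ob\H$, relative to the subspace $\wt\H=\ob\H\ominus\H$, and then to read off the statement about the double dual by uniqueness.

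First I would record the block structure already exploited in \cref{dualc.n.u}: with respect to the decomposition $\ob\H=\H\oplus\wt\H$ one has $\ob{V_{i,t}}=\begin{pmatrix}V_{i,t}&\star\\0&(\wt{V_{i,t}})^*\end{pmatrix}$ for $t\ge0$ and $i=1,2$, so that each $(\ob{V_{i,t}})^*$ leaves $\wt\H$ invariant for $t\ge0$ with $(\ob{V_{i,t}})^*|_{\wt\H}=\wt{V_{i,t}}$. Since $\ob{V_1}$ and $\ob{V_2}$ are commuting unitary groups, so are their adjoints $(\ob{V_1})^*$ and $(\ob{V_2})^*$; and because $\wt\H$ is invariant under each $(\ob{V_{i,t}})^*$ for $t\ge0$, the product $(\ob{V_{1,t}})^*(\ob{V_{2,s}})^*$ maps $\wt\H$ into $\wt\H$ and restricts there to $\wt{V_{1,t}}\wt{V_{2,s}}$ for all $t,s\ge0$. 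This gives the commuting-unitary-group and the extension properties with no difficulty.

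The substance is minimality. Set $\K:=\spncl\{(\ob{V_{1,t}})^*(\ob{V_{2,s}})^*(\wt\H):t,s\in\R\}$; since $(\ob{V_{i,t}})^*=\ob{V_{i,-t}}$, this coincides with $\spncl\{\ob{V_{1,t}}\ob{V_{2,s}}(\wt\H):t,s\in\R\}$, a subspace reducing both groups $\ob{V_1},\ob{V_2}$ and containing $\wt\H$. I would prove $\K=\ob\H$ by analysing $\K^\perp$. As $\wt\H\subseteq\K$, we get $\K^\perp\subseteq\wt\H^\perp=\H$. Being the complement of a reducing subspace, $\K^\perp$ reduces both groups, so $\ob{V_{i,t}}\K^\perp=\K^\perp\subseteq\H$ for every $t\in\R$. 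For $t\ge0$ this says $V_{i,t}|_{\K^\perp}=\ob{V_{i,t}}|_{\K^\perp}$ is a surjective isometry, and the identity $V_{i,t}^*=P_\H\ob{V_{i,-t}}|_\H$ shows $V_{i,t}^*\K^\perp\subseteq\K^\perp$; hence $\K^\perp$ is a common reducing subspace of $V_1$ and $V_2$ on which both act as unitary semigroups. Because $(V_1,V_2)$ is c.n.u, no such nonzero subspace exists, whence $\K^\perp=\{0\}$ and $\K=\ob\H$. This is the step I expect to be the crux: the argument mirrors the minimality that builds $\ob\H$ out of $\H$, and it is precisely here that one must invoke c.n.u for the \emph{pair} rather than for each factor separately, in order to rule out a common unitary direction.

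Finally, the ``in particular'' follows from uniqueness of the minimal unitary extension. Having shown that $((\ob{V_1})^*,(\ob{V_2})^*)$ on $\ob\H$ is the minimal unitary extension of $(\wt{V_1},\wt{V_2})$, the dual of $(\wt{V_1},\wt{V_2})$ is formed on $\ob\H\ominus\wt\H=\H$ via $\overset{\approx}{V_{i,t}}=((\ob{V_{i,t}})^*)^*|_\H=\ob{V_{i,t}}|_\H=V_{i,t}$ for $t\ge0$, so that $(\overset{\approx}{V_1},\overset{\approx}{V_2})=(V_1,V_2)$.
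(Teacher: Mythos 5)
Your proposal is correct and follows essentially the same route as the paper: both reduce the theorem to the minimality statement $\spncl\{\ob{V_{1,t}}\ob{V_{2,s}}(\wt\H):t,s\in \R\}=\ob\H$ and then show that the orthogonal complement of this span is a subspace of $\H$ reducing both $V_1$ and $V_2$ to unitary semigroups, hence zero by the c.n.u hypothesis. The only cosmetic difference is that you argue with the whole orthocomplement $\K^\perp$ at once, while the paper works with the cyclic subspace generated by a single orthogonal vector $x$.
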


\begin{proof}
 Let  $\wt\H=\ob\H\ominus\H.$ Only minimality of the extension needs to be proved, i.e.,
\begin{equation*}\label{minimal}
 \spncl\{\ob{V_{1,t}}\ob{V_{2,s}}(\wt\H):t,s\in \R\}=\ob\H.
\end{equation*}
To that end, let $x\in \ob\H$ and  $x\perp  \spncl\{\ob{V_{1,t}}\ob{V_{2,s}}(\wt\H):t,s\in \R\}.$ Then $\la \ob{V_{1,t}}\ob{V_{2,s}}\tilde{h},x \ra = 0$ for all $\tilde{h}\in \wt\H $ and $t,s\in \R.$
This implies $\la \tilde{h}, \ob{V_{1,t}}\ob{V_{2,s}}x\ra=0$ for all $\tilde{h}\in \wt\H$ and $ t,s\in \R.$ Hence $\ob{V_{1,t}}\ob{V_{2,s}}x\in \H$ for all $ t,s\in \R.$ So, $X=\spncl\{\ob{V_{1,t}}\ob{V_{2,s}}x: t,s\in \R\}\subseteq \H.$ Clearly $X$ reduces both $\ob{V_1}$ and $\ob{V_2}$ to unitary semigroups. Since $X\subseteq \H$ we have $\ob{V_{i,t}}|_{X}=V_{i,t}|_{X}$ for $i=1,2$ and for all $t\ge 0.$ This shows that $X$ reduces both $V_1$ and $V_2$ to unitary semigroups. Now since $(V_1,V_2)$ is c.n.u,  $X$ has to be the zero space. In particular, $x=0.$ This shows the minimality. \end{proof}

\begin{definition}
		A c.n.u pair $(V_1,V_2)$ of commuting semigroups of isometries is called a {\em modified-bishift-semigroup} if the dual $(\wt{V_1},\wt{V_2})$ of $(V_1,V_2)$ is a bishift-semigroup.
\end{definition}
The following is the main result of this section. It gives a Cooper type decomposition for the pairs of dual doubly commuting semigroups of isometries.
\begin{thm}	\label{dualdoublecommuting}
	 Let  $(V_1,V_2)$ be  a pair of commuting semigroups of isometries on $\H$. Suppose $(V_1,V_2)$ is dual doubly commuting.  Then the space $\H$ decomposes as $$\H=\H_{m}\oplus \H_{p,u}\oplus\H_{u,p}\oplus\H_{u,u},$$ so that $\H_m,\H_{p,u},\H_{u,p}$ and $\H_{u,u}$ reduce both $V_1$ and $V_2,$ \\
	 (1) $(V_1,V_2)$ is a modified-bishift-semigroup on $\H_{m},$\\
	  (2) $V_1$ is a c.n.u semigroup, $V_2$ is a  unitary semigroup on $\H_{p,u},$\\
	  (3) $V_1$ is a  unitary semigroup, $V_2$ is a c.n.u semigroup on $\H_{u,p},$  and\\
	  (4) both $V_1$ and $V_2$ are  unitary semigroups on  $\H_{u,u}.$
\end{thm}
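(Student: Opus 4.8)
The plan is to reduce the statement to the completely nonunitary case, pass to the dual (where double commutativity makes \cref{doublecommuting} available), and then transport the resulting decomposition back through duality.

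First I would peel off the $\H_{u,u}$ summand. Form the product semigroup $V=V_1V_2=(V_{1,t}V_{2,t})_{t\ge0}$, which is a $C_0$-semigroup of isometries, and apply \cref{Cooper} to it. Let $\H_{u,u}$ be its unitary part and $\H_c$ its c.n.u part. By \cref{rem}, $\H_{u,u}$ reduces both $V_1$ and $V_2$, both are unitary on it (this is already piece (4)), and $(V_1,V_2)|_{\H_c}$ is a c.n.u pair. Because both semigroups are already unitary on $\H_{u,u}$, its minimal unitary extension is itself, so $\H_{u,u}$ contributes nothing to $\wt\H$; hence the dual $(\wt{V_1},\wt{V_2})$ of $(V_1,V_2)$ coincides with the dual of the c.n.u pair $(V_1,V_2)|_{\H_c}$. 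It therefore suffices to produce $\H_m,\H_{p,u},\H_{u,p}$ inside $\H_c$.

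Next I would use the hypothesis. By assumption $(\wt{V_1},\wt{V_2})$ is doubly commuting, and by \cref{dualc.n.u} it is c.n.u. Applying \cref{doublecommuting} gives $\wt\H=\wt\H_{p,p}\oplus\wt\H_{p,u}\oplus\wt\H_{u,p}\oplus\wt\H_{u,u}$, each summand reducing $\wt{V_1}$ and $\wt{V_2}$; but the $(u,u)$ summand is a reducing subspace on which both are unitary, so it vanishes by c.n.u of the dual pair. Thus $\wt\H=\wt\H_{p,p}\oplus\wt\H_{p,u}\oplus\wt\H_{u,p}$, and by \cref{thm:DCcnumodel} the doubly commuting pair $(\wt{V_1},\wt{V_2})|_{\wt\H_{p,p}}$, being c.n.u in both components, is a bishift-semigroup. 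I would then transport this back, the key point being that duality distributes over orthogonal direct sums: if $\wt\H=\bigoplus_k\wt\H_k$ reduces both $\wt{V_1}$ and $\wt{V_2}$, then forming the minimal unitary extension of each summand and taking the block-diagonal sum yields a unitary extension of $(\wt{V_1},\wt{V_2})$ whose minimality follows from the minimality of each block; by uniqueness of the minimal unitary extension $\ob\H=\bigoplus_k\ob{\wt\H_k}$, and the dual space $\ob\H\ominus\wt\H=\H_c$ splits as the orthogonal sum of the dual spaces $\ob{\wt\H_k}\ominus\wt\H_k$, each reducing $(V_1,V_2)$. Combining this with \cref{double-dual}, which identifies the dual of $(\wt{V_1},\wt{V_2})$ with $(V_1,V_2)|_{\H_c}$, produces $\H_c=\H_m\oplus\H_{p,u}\oplus\H_{u,p}$, where the restriction of $(V_1,V_2)$ to each summand is the dual of the restriction of $(\wt{V_1},\wt{V_2})$ to $\wt\H_{p,p},\wt\H_{p,u},\wt\H_{u,p}$ respectively.

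Finally I would identify the three pieces. On $\H_m$ the dual of $(V_1,V_2)$ is the bishift-semigroup $(\wt{V_1},\wt{V_2})|_{\wt\H_{p,p}}$, so by definition $(V_1,V_2)|_{\H_m}$ is a modified-bishift-semigroup (piece (1)); since $\H_m\subseteq\H_c$ reduces both semigroups, the restriction is indeed c.n.u as required. For $\H_{p,u}$ I would establish the correspondence that a component is unitary for the pair if and only if the corresponding component of the dual is unitary: if $V_2$ is unitary then $V_{2,s}\H=\H$, which forces $\H$ to reduce the unitary group $\ob{V_2}$ and hence makes $\wt{V_2}$ unitary, while the converse for a c.n.u pair follows from \cref{double-dual}. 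Since $\wt{V_2}$ is unitary on $\wt\H_{p,u}$, this yields $V_2$ unitary on $\H_{p,u}$; moreover $V_1|_{\H_{p,u}}$ must be c.n.u, for its unitary part is invariant under the commuting unitary $V_2$ and so reduces both semigroups, which would violate c.n.u of the pair unless it is trivial. This is piece (2), and $\H_{u,p}$ is handled symmetrically (piece (3)). The main obstacle is the bookkeeping in the third step, namely proving cleanly that both the minimal unitary extension and the dual construction distribute over orthogonal direct sums of reducing subspaces and that the resulting summands of $\H_c$ genuinely reduce $V_1$ and $V_2$; the component-unitarity correspondence used in the last step is the other point that requires care.
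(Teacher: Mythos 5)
Your proposal is correct and follows essentially the same route as the paper: reduce to the c.n.u case via Cooper's decomposition of the product semigroup, apply \cref{doublecommuting} and \cref{thm:DCcnumodel} to the doubly commuting dual, kill its $(u,u)$ part with \cref{dualc.n.u}, and transport the decomposition back to $\H$ through the minimal unitary extension using \cref{double-dual}. The only local deviation is your argument that $V_1|_{\H_{p,u}}$ is c.n.u (via the unitary part of $V_1$ being reduced by the commuting unitary $V_2$, contradicting c.n.u of the pair), where the paper instead uses minimality of $\hat\H_{p,u}$ over $\wt\H_{p,u}$ and the technique of \cref{dualc.n.u}; both work.
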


\begin{proof}
	First assume that $(V_1,V_2)$ is a c.n.u pair of dual doubly commuting semigroups of isometries. Let $(\ob{V_1},\ob{V_2})$ on $\ob{\H}$ be the minimal unitary extension of $(V_1,V_2)$  and let $(\wt{V_1},\wt{V_2})$ be the dual of $(V_1,V_2).$
	
	Since $(V_1,V_2)$ is dual doubly commuting, $(\wt{V_1},\wt{V_2})$ is doubly commuting. Hence by \cref{doublecommuting} and \cref{thm:DCcnumodel} we have $\wt\H=\wt\H_{p,p}\oplus\wt\H_{p,u}\oplus\wt\H_{u,p}\oplus\wt\H_{u,u},$ where $(\wt{V_1},\wt{V_2})$ is a bishift-semigroup on $\wt\H_{p,p},$   $\wt{V_1}$ is a c.n.u semigroup and $\wt{V_2}$ is a unitary semigroup on $\wt\H_{p,u},$ and $\wt{V_1}$ is a unitary semigroup and $\wt{V_2}$ is a c.n.u semigroup on $\wt\H_{u,p}.$ By \cref{dualc.n.u} we have $\wt\H_{u,u}=\{0\}.$

	For $i,j\in \{p,u\}$ and $(i,j)\ne (u,u),$
	let
	\[\hat\H_{i,j}=\spncl\{\ob{V_{1,t}}\ob{V_{2,s}}(\wt\H_{i,j}):t,s\in \R\}.\]
	Then, it is not difficult to show that  $\hat\H_{p,p},\hat\H_{p,u}$ and $\hat\H_{u,p}$ are pairwise orthogonal and $\ob\H=\hat\H_{p,p}\oplus\hat\H_{p,u}\oplus\hat\H_{u,p}.$ Also clearly $\hat\H_{i,j}$ is a reducing subspace for $\ob{V_{k,t}}$ for  $k=1,2,t\ge 0$ and $i,j\in\{p,u\},(i,j)\ne (u,u).$
	
	 Let $\H_m=\hat\H_{p,p}\ominus \wt\H_{p,p}$ and  $\H_{i,j}=\hat\H_{i,j}\ominus\wt\H_{i,j}$ for $i,j\in\{p,u\}$ with $ i\ne j.$  Then  \begin{equation}\label{H-decomp}
		\H=\H_m\oplus\H_{p,u}\oplus\H_{u,p}.
	\end{equation}

 Note that $\H_m,\H_{p,u}$ and $\H_{u,p}$ are invariant subspaces for $\ob{V_{k,t}}$ for all $t\ge 0, k=1,2.$ Hence they are invariant subspaces for $V_{k,t}.$ Thus by \cref{H-decomp}, they are reducing subspaces for $V_{k,t}.$

Note that the pair $(\ob{V_1}|_{\hat\H_{p,p}},\ob{V_2}|_{\hat\H_{p,p}})$  on $\hat\H_{p,p}$  is the minimal unitary extension for  $(V_1|_{\H_m},V_2|_{\H_m})$ on $\H_m$ and  the pair $(\ob{V_1}|_{\hat\H_{i,j}},\ob{V_2}|_{\hat\H_{i,j}})$  on $\hat\H_{i,j}$  is the minimal unitary extension for  $(V_1|_{\H_{i,j}},V_2|_{\H_{i,j}})$ on $\H_{i,j}$ for $i,j\in \{p,u\}$ and $i\ne j.$
	
	Now we shall show that $(V_1|_{\H_m},V_2|_{\H_m})$ is a modified-bishift-semigroup on $\H_m.$ To show that, we must show that $((\ob{V_1})^*|_{\wt\H_{p,p}},(\ob{V_2})^*|_{\wt\H_{p,p}})$ is a bishift-semigroup on $\wt\H_{p,p}.$ For this, note that $((\ob{V_1})^*|_{\wt\H_{p,p}},(\ob{V_2})^*|_{\wt\H_{p,p}})=(\wt{V_1}|_{\wt\H_{p,p}},\wt{V_2}|_{\wt\H_{p,p}})$ and it is a bishift-semigroup.
		
	Next we shall show that $V_1|_{\H_{p,u}}$ is a c.n.u semigroup and $V_2|_{\H_{p,u}}$ is a unitary semigroup on $\H_{p,u}.$ Note that $(\ob{V_2})^*|_{\hat\H_{p,u}}$ is a unitary semigroup on $\hat\H_{p,u}$ and  $(\ob{V_2})^*|_{\wt\H_{p,u}}=\wt{V_2}|_{\wt\H_{p,u}}$  is a unitary semigroup on $\wt\H_{p,u}.$ Hence $\ob{V_2}|_{\H_{p,u}}=V_2|_{\H_{p,u}}$ is a unitary semigroup on 	$\H_{p,u}.$	Note that 	
	\begin{equation}\label{Hpu}
		\hat\H_{p,u}=\spncl\{\ob{V_{1,t}}(\wt\H_{p,u}):t\in \R\}.
	\end{equation}
 Since  $\ob{V_1}^*$ is a unitary extension for the c.n.u semigroup $\wt{V_1}|_{\wt\H_{p,u}}$ on $\wt\H_{p,u},$ by \cref{Hpu}  one can see that $\hat\H_{p,u}$ is the minimal space for this extension. Therefore, since $\H_{p,u}=\hat\H_{p,u}\ominus\wt\H_{p,u},$  we see (using the same techniques as in \cref{dualc.n.u}) that $\ob{V_1}|_{\H_{p,u}}=V_1|_{\H_{p,u}}$ is also a c.n.u semigroup on $\H_{p,u}.$

 Similarly we can show that  $V_1|_{\H_{u,p}}$ is a  unitary semigroup, and $V_2|_{\H_{u,p}}$ is a c.n.u semigroup on $\H_{u,p}.$ This proves the theorem for a c.n.u pair $(V_1,V_2).$

  Now let  $(V_1,V_2)$ be any pair (not necessarily c.n.u) of dual doubly commuting semigroups of isometries. Let $\H_{u,u}$ be the unitary part in \cref{Cooper} for $V_1V_2.$ Then $\H_{u,u}$ reduces both  $V_1$ and $V_2,$ hence  $V_1|_{\H_{u,u}}$ and  $V_2|_{\H_{u,u}}$ are unitary semigroups.

 Let $\H_s=\H\ominus\H_{u,u}.$ Since the dual of $(V_1,V_2)$ is same as the dual of $(V_1|_{\H_s},V_2|_{\H_s}),$  $(V_1|_{\H_s},V_2|_{\H_s})$ is a c.n.u pair of dual doubly commuting semigroups of isometries. Hence by applying the above proof to the c.n.u pair $(V_1|_{\H_s},V_2|_{\H_s}),$ we get the proof of the theorem. 
\end{proof}

The converse of this theorem also holds and it is easy to see. Note that the dual of a commuting pair of a c.n.u semigroup and a unitary semigroup is again a commuting pair of a c.n.u semigroup and a unitary semigroup from  \cref{thm:cnuUmodel}.

The next result shows that $(\M_1,\M_2)$ of \cref{DDC-ex} is a model for the pairs of dual doubly commuting semigroups of isometries.
\begin{thm}\label{dual-model}
	Let $(V_1,V_2)$ be a modified-bishift-semigroup  on $\H.$ Then $(V_1,V_2)$ is jointly unitarily equivalent to $(\M_1\otimes I_\F,\M_2\otimes I_\F)$ for some Hilbert space $\F.$
\end{thm}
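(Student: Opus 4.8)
The plan is to exploit the fact that, by \cref{double-dual}, forming the dual is—up to joint unitary equivalence—an involution on the class of c.n.u pairs, and to combine this with the dual computation already implicit in \cref{DDC-ex}. Since $(V_1,V_2)$ is a modified-bishift-semigroup, it is c.n.u and its dual $(\wt{V_1},\wt{V_2})$ is a bishift-semigroup, hence jointly unitarily equivalent to $(\Sc_1\otimes I_\F,\Sc_2\otimes I_\F)$ on $L^2(\R_+^2)\otimes\F$ for some Hilbert space $\F$. By \cref{dualc.n.u} the pair $(\wt{V_1},\wt{V_2})$ is c.n.u, while $(\Sc_1\otimes I_\F,\Sc_2\otimes I_\F)$, being a bishift-semigroup, is a pair of doubly commuting c.n.u semigroups by \cref{thm:DCcnumodel} and hence a c.n.u pair by \cref{rem}(3). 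Thus both members are c.n.u pairs, and I intend to take duals on both sides.

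Two auxiliary facts carry the argument. First, the dual operation respects joint unitary equivalence: if $(P_1,P_2)$ on $\H_1$ and $(Q_1,Q_2)$ on $\H_2$ are jointly unitarily equivalent c.n.u pairs via a unitary $U$, then $U$ extends to a unitary $\ob U:\ob{\H_1}\to\ob{\H_2}$ intertwining the minimal unitary extensions. This holds because the Gram matrix of the spanning vectors $\ob{P_{1,t}}\,\ob{P_{2,s}}h$ depends only on inner products of the form $\la P_{1,\alpha}P_{2,\beta}h, P_{1,\gamma}P_{2,\delta}h'\ra$ with $\alpha,\beta,\gamma,\delta\ge 0$—obtained by moving any group factor with negative parameter to the other side of the inner product and using $\ob{P_{i,\tau}}|_{\H_1}=P_{i,\tau}$ for $\tau\ge0$—and these are preserved by $U$. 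The extension $\ob U$ carries $\H_1$ onto $\H_2$, hence $\wt{\H_1}$ onto $\wt{\H_2}$, and intertwines the duals. Second, the dual is compatible with $\otimes I_\F$: if $(\ob{P_1},\ob{P_2})$ on $\ob{\H_1}$ is the minimal unitary extension of $(P_1,P_2)$, then $(\ob{P_1}\otimes I_\F,\ob{P_2}\otimes I_\F)$ is the minimal unitary extension of $(P_1\otimes I_\F,P_2\otimes I_\F)$, so duality and tensoring by $I_\F$ commute.

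Granting these, I would first verify directly from \cref{DDC-ex} that the dual of $(\Sc_1,\Sc_2)$ on $L^2(\R_+^2)$ is $(\M_1,\M_2)$: the minimal unitary extension of $(\Sc_1,\Sc_2)$ is $(\ob{\Sc_1},\ob{\Sc_2})$ on $L^2(\R^2)$, its orthogonal complement is $L^2(\R^2\setminus\R_+^2)$, and restricting the adjoint translations $\ob{\Sc_{i,-t}}$ to this complement reproduces exactly the formulas defining $\M_{i,t}$. By the tensor compatibility, the dual of $(\Sc_1\otimes I_\F,\Sc_2\otimes I_\F)$ is therefore $(\M_1\otimes I_\F,\M_2\otimes I_\F)$. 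Applying the first auxiliary fact to the joint unitary equivalence between $(\wt{V_1},\wt{V_2})$ and $(\Sc_1\otimes I_\F,\Sc_2\otimes I_\F)$ yields a joint unitary equivalence of their duals. By \cref{double-dual} the dual of $(\wt{V_1},\wt{V_2})$ is $(V_1,V_2)$ itself, whereas the dual of $(\Sc_1\otimes I_\F,\Sc_2\otimes I_\F)$ is $(\M_1\otimes I_\F,\M_2\otimes I_\F)$; hence $(V_1,V_2)$ is jointly unitarily equivalent to $(\M_1\otimes I_\F,\M_2\otimes I_\F)$, as desired.

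The main obstacle is the first auxiliary fact—that a joint unitary equivalence of the isometric pairs lifts to their minimal unitary extensions and then descends to the duals. Its content is the uniqueness, together with a canonical intertwiner, of the minimal unitary extension of a commuting pair of isometric semigroups, and this is precisely where the careful bookkeeping of the signs of the group parameters in the Gram-matrix computation is required. Everything else is either a direct restriction computation in $L^2(\R^2)$ or an application of results already established.
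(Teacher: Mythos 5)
Your proposal is correct and follows essentially the same route as the paper: identify the dual $(\wt{V_1},\wt{V_2})$ with $(\Sc_1\otimes I_\F,\Sc_2\otimes I_\F)$, compute that its dual is $(\M_1\otimes I_\F,\M_2\otimes I_\F)$, and invoke \cref{double-dual}. The paper leaves implicit the two auxiliary facts you spell out (that joint unitary equivalence of c.n.u pairs lifts to minimal unitary extensions and hence passes to duals, and that dualizing commutes with tensoring by $I_\F$), and your verifications of both are sound.
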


\begin{proof}
	Since the dual $(\wt{V_1},\wt{V_2})$ of $(V_1,V_2)$ is a bishift-semigroup, $(\wt{V_1},\wt{V_2})$ is jointly unitarily equivalent to $(\Sc_1\otimes I_\F,\Sc_2\otimes I_\F)$  for some Hilbert space $\F.$ 
	 Note that the dual of $(\Sc_1\otimes I_\F,\Sc_2\otimes I_\F)$ is $(\M_1\otimes I_\F,\M_2\otimes I_\F),$ hence by \cref{double-dual}, $(V_1,V_2)$  is jointly unitarily equivalent to $(\M_1\otimes I_\F,\M_2\otimes I_\F).$
\end{proof}

The forward part of the following corollary follows from \cref{dualdoublecommuting}, \cref{dual-model} and the fact that $(\M_1,\M_2)$ is not doubly commuting. The converse part follows from \cite{Fuglede} and \cref{dualdoublecommuting}.
\begin{corollary}
	Let $(V_1,V_2)$ be a  pair of commuting semigroups of isometries on $\H.$ Then $(V_1,V_2)$ is simultaneously doubly commuting and dual doubly commuting if and only if $\H$ decomposes as the direct sum $$\H=\H_{p,u}\oplus\H_{u,p}\oplus\H_{u,u}$$
	 of reducing subspaces $\H_{p,u},\H_{u,p}$ and $\H_{u,u}$ for both $V_1$ and $V_2,$	such that\\
	(1) $V_1$ is a c.n.u semigroup and $V_2$ is a unitary semigroup on $\H_{p,u},$\\
	(2) $V_1$ is a unitary semigroup and $V_2$ is a c.n.u semigroup on $\H_{u,p},$ and \\
	(3) both $V_1$ and $V_2$ are unitary semigroups on $\H_{u,u}.$
\end{corollary}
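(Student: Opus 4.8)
The plan is to treat the two implications separately, using the structure theorem \cref{dualdoublecommuting} on the dual doubly commuting side, and the elementary fact that anything commuting with a unitary semigroup automatically doubly commutes with it on the doubly commuting side.

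For the forward implication, assume $(V_1,V_2)$ is both doubly commuting and dual doubly commuting. Since it is dual doubly commuting, \cref{dualdoublecommuting} supplies a reducing decomposition $\H=\H_m\oplus\H_{p,u}\oplus\H_{u,p}\oplus\H_{u,u}$ in which $(V_1,V_2)$ is a modified-bishift-semigroup on $\H_m$ and of the asserted mixed/unitary types on the remaining summands; so it remains only to show $\H_m=\{0\}$. Here I would use that double commutativity passes to reducing subspaces and is preserved by joint unitary equivalence: restricting the globally doubly commuting pair to the reducing subspace $\H_m$ and applying \cref{dual-model} shows that $(\M_1\otimes I_\F,\M_2\otimes I_\F)$ is doubly commuting for the relevant $\F$. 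Since $(\M_1,\M_2)$ is not doubly commuting and the defect $(\M_{1,t}\M_{2,s}^*-\M_{2,s}^*\M_{1,t})\otimes I_\F$ vanishes only when $\F=\{0\}$, we conclude $\H_m=\{0\}$, which yields exactly the claimed three-summand decomposition.

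For the converse, suppose $\H=\H_{p,u}\oplus\H_{u,p}\oplus\H_{u,u}$ with the stated types. Double commutativity is immediate: on each summand at least one of the two semigroups is unitary, and if $W$ commutes with a unitary semigroup $(U_s)$ then $U_s^*=U_s^{-1}$ turns $W U_s=U_s W$ into $W U_s^*=U_s^* W$, which is the required relation; summing over the reducing summands gives double commutativity on all of $\H$. For dual double commutativity I would observe that this decomposition is precisely the conclusion of \cref{dualdoublecommuting} with vanishing modified-bishift part, and invoke the converse of that theorem recorded in the text. Concretely, the minimal unitary extension, and hence the dual, respects the orthogonal reducing decomposition: the dual of the $\H_{u,u}$ summand lives on the zero space, while by \cref{thm:cnuUmodel} the dual of each mixed summand is again a commuting pair of a c.n.u.\ and a unitary semigroup — this is where Fuglede's theorem \cite{Fuglede} enters, through \cref{normal-cont-comm} and \cref{thm:cnuUmodel}. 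Each such mixed dual pair is doubly commuting by the same unitary argument, so $(\wt{V_1},\wt{V_2})$ is doubly commuting, i.e.\ $(V_1,V_2)$ is dual doubly commuting.

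The main obstacle I anticipate is the rigidity step in the forward implication: making precise that the modified-bishift block cannot coexist with double commutativity. This rests on the concrete fact that $(\M_1,\M_2)$ of \cref{DDC-ex} is genuinely not doubly commuting, which one checks by exhibiting a single pair $t,s$ and a test function on $L^2(\R^2\setminus\R^2_+)$ for which $\M_{1,t}\M_{2,s}^*$ and $\M_{2,s}^*\M_{1,t}$ disagree, together with the stability of this obstruction under tensoring by $I_\F$. Everything else — inheritance of double commutativity by reducing subspaces, its invariance under joint unitary equivalence, and the splitting of the dual along the orthogonal summands — is routine once \cref{dualdoublecommuting}, \cref{dual-model} and \cref{thm:cnuUmodel} are available.
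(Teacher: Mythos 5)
Your proposal is correct and follows essentially the same route as the paper: the forward direction via \cref{dualdoublecommuting}, \cref{dual-model} and the fact that $(\M_1,\M_2)$ is not doubly commuting (forcing $\H_m=\{0\}$), and the converse via the observation that commuting with a unitary semigroup implies double commutation (the content of Fuglede's theorem in this case) together with the converse of \cref{dualdoublecommuting} and \cref{thm:cnuUmodel}. Your explicit computation plan for showing $(\M_1,\M_2)$ is not doubly commuting is a detail the paper leaves implicit, but it is routine and your outline of it is sound.
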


\noindent{\bf Acknowledgements}
 The authors thank the referee for a careful reading and useful suggestions. Research is funded by the J C Bose fellowship JCB/2021/000041, the  D S Kothari postdoctoral fellowship MA/20-21/0047 and the DST FIST program - 2021 [TPN - 700661].

\end{document}